\newtheorem{theorem}{Theorem}[section]
\newtheorem{corollary}[theorem]{Corollary} 
\newtheorem{lemma}[theorem]{Lemma}
\newtheorem{proposition}[theorem]{Proposition}
\newdefinition{definition}[theorem]{Definition}
\newdefinition{remark}[theorem]{Remark}
\newdefinition{example}[theorem]{Example}
\newproof{proof}{Proof}  
\def\H{{\mathcal H}}
\newcommand{\abs}[1]{\lvert#1\rvert}
\def\norm#1{\left\Vert#1\right\Vert}
\def\norm#1{\left\Vert#1\right\Vert}
\def\C {{\mathbb C}}
\def\H{{\mathcal H}}
\def\R{{\mathbb R}}
\def\e{\epsilon}
\def\tr{{\mathrm{tr}\,}}
\newcommand{\spn}{\rm span}
\newcommand{\M}{\mathfrak{M}}
\newcommand{\SC}{\mathcal{SU}(C)}
\journal{Linear Algebra and Its Applications}
\begin{document}

\begin{frontmatter}

\title{Isometries of a Generalized Numerical Radius on Compact Operators}

\author[label1]{Maria Inez Cardoso Gon\c{c}alves\fntext[label1]{Corresponding author.}\fnref{fn1}}
\ead{minezcg@gmail.com}

\author[label2]{Vladimir G. Pestov\fntext[label2]{Special Visiting Researcher of the program Science Without Borders of CAPES (Brazil), processo 085/2012.}\fnref{fn1,fn2}}
\ead{vpest283@uottawa.ca}

\address[fn1]{Departamento de Matem\'atica - Universidade Federal de Santa Catarina - Trindade - Florian\'opolis - SC - 88.040-900 - Brazil}

\address[fn2]{Department of Mathematics and Statistics, University of
Ottawa, Ottawa, ON, K1N 6N5, Canada}

\begin{abstract}
  We describe all isometries of the $q$-numerical radius on the space ${\mathcal K}(\H)$ of compact operators on a (possibly infinite-dimensional) Hilbert space $\H$.
\end{abstract}

\begin{keyword}
$q$-numerical radius \sep isometries \sep compact operators


\MSC[2010] 47A12 \sep 15A60 \sep 15A04 \sep  47A30 \sep 46B20 \sep 46B28
\end{keyword}

\end{frontmatter}

\section{Introduction}

Let $0<q\leq 1$. The $q$-numerical radius of a bounded linear operator $A$ on a Hilbert space $\H$ is given by
\[r_q(A) = \sup\{\langle Ax,y\rangle\colon \norm x =\norm y = 1, \langle x,y\rangle =q\}.\]
The $q$-numerical radius is a norm on ${\mathcal B}(\H)$, equivalent to the uniform (spectral) norm. 
For $q=1$, this reduces to the classical numerical radius:
\[r(A) = \sup\{\langle Ax,x\rangle\colon \norm x = 1\}.\]

In this article, we describe the isometries of the space ${\mathcal K}(\H)$ of compact operators on an infinite-dimensional complex Hilbert space with regard to the $q$-numerical radius. In the finite-dimensional case, the description was obtained in \cite{maria}, while the case of the classical numerical radius (also in finite dimensions) was previously treated by Le\v snjak \cite{lesnjak}, Li and \v Semrl \cite{LS} and Li et al. \cite{LMR}. 

Here is the main result.

\begin{theorem}
  Let $\H$ be an infinite-dimensional complex Hilbert space.
  Let $0<q\leq 1$ and let $\phi\colon {\mathcal K}(\H)\to {\mathcal K}(\H)$. Then $\phi$ is an isometry of the $q$-numerical radius if and only if there exist a compact operator $S_0\in {\mathcal K}(\H)$, a unitary operator ${\mathcal U}\in {\mathcal{U}}(\H)$ and a complex number $\mu$ with $\abs{\mu}=1$ such that for all $A\in {\mathcal K}(\H)$
  \[\phi(A) =S_0 + \mu U^\ast A^{\dag} U,\]
  where $A^{\dag}$ denotes either $A$ ot $A^t$ or $A^\ast$ or $\bar A$.
  \label{th:main}
\end{theorem}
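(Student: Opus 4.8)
The plan is to handle the two implications separately, with the ``if'' direction a direct verification and the ``only if'' direction the substantive part. For sufficiency, note that an isometry is tested on differences $\phi(A)-\phi(B)$, so the translation $S_0$ cancels and it suffices to check that $r_q$ is invariant under each remaining operation. Multiplication by $\mu$ with $\abs{\mu}=1$ is covered by the homogeneity of the norm, $r_q(\mu A)=\abs{\mu}\,r_q(A)$. Conjugation by a unitary is immediate, since the constraint set $\{(x,y):\norm x=\norm y=1,\ \langle x,y\rangle=q\}$ is invariant under $(x,y)\mapsto(Ux,Uy)$ and $\langle U^\ast A U x,y\rangle=\langle A Ux,Uy\rangle$. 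For the four choices of $A^{\dag}$ I would observe that the maps $(x,y)\mapsto(y,x)$ and $(x,y)\mapsto(\bar x,\bar y)$ (entrywise conjugation in a fixed orthonormal basis) also preserve the constraint set, because $\langle y,x\rangle=\overline{\langle x,y\rangle}=q$ and $\langle\bar x,\bar y\rangle=\overline{\langle x,y\rangle}=q$ as $q$ is real; combined with $\langle A^\ast x,y\rangle=\overline{\langle Ay,x\rangle}$ and $\langle\bar A x,y\rangle=\overline{\langle A\bar x,\bar y\rangle}$ this gives $r_q(A^\ast)=r_q(\bar A)=r_q(A)$, and $r_q(A^t)=r_q(A)$ follows from $A^t=(\bar A)^\ast$. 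Finally each $A^{\dag}$ is compact whenever $A$ is, so $\phi$ maps $\mathcal K(\H)$ into itself.

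For necessity, regard $\phi$ as a surjective isometry (the asserted forms are bijective, and surjectivity is what the reduction below needs). First I would strip off the translation: replacing $\phi$ by $A\mapsto\phi(A)-\phi(0)$ I may assume $\phi(0)=0$ and set $S_0=\phi(0)$. Since a surjective isometry between normed spaces fixing the origin is real-linear (Mazur--Ulam), the reduced map $\psi$ is a real-linear surjective isometry of $(\mathcal K(\H),r_q)$. The guiding idea is to dualize: identify $(\mathcal K(\H),r_q)^\ast$ with the trace class $\mathcal C_1(\H)$ under the pairing $\langle A,T\rangle=\tr(AT)$, so that, writing $T_{x,y}$ for the rank-one operator $z\mapsto\langle z,y\rangle x$, one has $\langle A x,y\rangle=\tr(A\,T_{x,y})$ and hence
\[r_q(A)=\sup\{\abs{\tr(A\,T)}:T\in\mathcal E\},\qquad \mathcal E=\{T_{x,y}:\norm x=\norm y=1,\ \langle x,y\rangle=q\}.\]
Thus the dual unit ball is the weak\,$^\ast$-closed absolutely convex hull of $\mathcal E$, and by Milman's theorem its extreme points lie among the unimodular multiples $\{\lambda T:\abs\lambda=1,\ T\in\mathcal E\}$ of these rank-one ``test'' functionals.

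The next step is to push the adjoint $\psi^\ast$ through this picture: as a weak\,$^\ast$-continuous isometry of the dual it carries extreme points to extreme points, hence induces a bijection of $\mathcal E$ up to phases, sending each pair $(x,y)$ with $\langle x,y\rangle=q$ to a pair $(x',y')$ with $\langle x',y'\rangle=q$ and preserving the overlap relations among such pairs. The heart of the argument is then a Wigner--Uhlhorn-type rigidity statement: a transformation of the (pairs of) rays of $\H$ preserving the relation $\langle x,y\rangle=q$ is implemented by a single unitary or antiunitary operator $W$ on $\H$, together with a global phase $\mu$. Unwinding the unitary case yields $\psi(A)=\mu U^\ast A U$ or $\mu U^\ast A^t U$, and the antiunitary case yields $\mu U^\ast A^\ast U$ or $\mu U^\ast\bar A U$; the dichotomy unitary/antiunitary is precisely what forces $\psi$ to be complex-linear or conjugate-linear and produces the four alternatives for $A^{\dag}$.

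I expect the main obstacle to be this rigidity step together with the infinite-dimensional extreme-point analysis. Two points need care. First, the geometry of $\mathcal E$ changes sharply between $q=1$, where $\langle x,y\rangle=1$ forces $y=x$ and $\mathcal E$ reduces to rank-one projections, and $q<1$, where the two vectors are genuinely independent; the finite-dimensional treatment in \cite{maria} already separates these cases, and I would follow the same split. Second, in infinite dimensions one must check that the relevant extreme points are abundant enough to pin down $W$ globally, verify weak\,$^\ast$-continuity of $\psi^\ast$ so that Milman's theorem applies, and ensure that no spurious limit functionals in the weak\,$^\ast$-closure of $\mathcal E$ contaminate the extreme set. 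A viable alternative to a direct global Wigner argument is to restrict $\psi$ to finite-rank operators, invoke the finite-dimensional classification of \cite{maria} on each finite-dimensional compression, and then run a coherence argument showing that the resulting unitary, phase, and transpose-type are independent of the chosen subspace; reconciling the per-subspace choices of $A^{\dag}$ into one global choice is the delicate bookkeeping I would anticipate there.
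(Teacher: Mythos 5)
Your sufficiency argument and the opening reductions (subtracting $\phi(0)$, Mazur--Ulam, passing to the dual map on $\mathcal C_1(\H)$, invoking Milman's converse to Krein--Milman) match the paper. But the proposal has a genuine gap exactly at the step you call the heart of the argument. You write that the adjoint ``carries extreme points to extreme points, hence induces a bijection of $\mathcal E$ up to phases.'' That inference requires knowing that the extreme points of the dual unit ball are \emph{exactly} the saturated unitary orbit $\mathcal{SU}(C_q)$. Milman's theorem gives only one containment: the extreme points lie in the weak$^\ast$-closure of $\mathcal{SU}(C_q)$, and in infinite dimensions that closure may contain points outside the orbit. The paper states explicitly that the authors were unable to prove the exact identification of the extreme points, and the whole technical novelty of the paper is the workaround: points of $\mathcal{SU}(C_q)$ are points at which the weak topology agrees with the Hilbert--Schmidt norm topology on the dual ball (Lemma \ref{l:finer}), the dual map is a homeomorphism for both the $w_1^\ast$ and Hilbert--Schmidt topologies, and $\mathcal{SU}(C_q)$ is complete, hence closed, in the Hilbert--Schmidt metric (Corollary \ref{c:closedHS}); chaining these facts forces $\psi(\mathcal{SU}(C_q))=\mathcal{SU}(C_q)$ without ever determining the extreme-point set. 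You flag the danger (``spurious limit functionals \ldots contaminate the extreme set'') but offer no mechanism to exclude it, so the argument stalls precisely where the infinite-dimensional difficulty lives.

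The subsequent ``Wigner--Uhlhorn-type rigidity'' step is also asserted rather than proved, and it is not clear it is available in the form you need: what the dual map preserves is the real-linear and convex structure, not transition probabilities between arbitrary pairs of rays, so the hypotheses of a Wigner-type theorem are not obviously met. The paper takes a different and more concrete route: it first shows the dual map preserves rank-one operators by comparing the real dimension of the span of rank-one summands of a rank-one versus a rank-two operator (Lemmas \ref{l:2.2} and \ref{l:2.3}, giving infinite dimension versus dimension at most $7$), then applies the Omladi\v c--\v Semrl theorem on additive rank-one preservers to obtain $\psi(A)=PA^\dag Q$ with $P,Q$ bounded and invertible, and finally proves $P,Q$ are scalar multiples of unitaries via orthogonality preservation and the Toeplitz--Hausdorff convexity theorem. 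To repair your proposal you would need either to supply a proof of the rigidity statement for the relation $\langle x,y\rangle=q$ (including the case $q<1$ where the pairs are genuinely two-vector objects), or to adopt the rank-one-preserver route; in either case you still need the orbit-invariance argument of the previous paragraph first.
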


Here $A^\ast$ is the usual adjoint operator, given by
\[\langle x,Ay\rangle = \langle A^\ast x, y\rangle,\]
$A^t$ is the traspose of $A$,
\[\langle x,A^ty\rangle = \langle \bar y, A \bar x\rangle,\]
and $\bar A$ is the complex conjugate of $A$,
\[\langle x,\bar A y\rangle =\overline{\langle \bar x, A \bar y \rangle}.\]

While the strategy of the proof is broadly similar to that in \cite{maria}, the infinite dimensional case poses numerous difficulties and requires new techniques. For instance, one has to simultaneously deal with a number of different topologies on various spaces of operators, which all coincide between themselves in the finite-dimensional situation. The main challenge was the usage of extreme points, which was the key component of the proofs in \cite{LMR,maria}. If one defines the operator 
\[C_q = q E_{11}+\sqrt{1-q^2} E_{12}\]
on $\H=\ell^2$, then the unit ball of the dual norm to the generalized numerical radius is the convex hull of the saturated unitary orbit 
\[{\mathcal{SU}}\,(C_q) = \{\lambda U^\ast C_q U\colon \lambda\in\C,~ \abs{\lambda}=1,~U\in {\mathcal U}(\H)\}.
\]
It is easy to see that all points of ${\mathcal{SU}}(C_q)$ are extreme points, indeed exposed points of the dual unit ball. However, we were unable to prove that the set of extreme  (or exposed) points of the dual ball is exactly the set ${\mathcal SU}(C_q)$, like it is in a finite-dimensional situation. Thus, a key component of the proofs in \cite{LMR,maria} was missing, and we had to find an ingenious way around it, by using points of weak continuity with regard to the Hilbert-Schmidt norm topology combined with David Milman's converse to the Krein-Milman theorem.

We believe that this technique can be of interest in connection with the next natural open problem: that of characterizing isometries of the generalized numerical radius on a large space ${\mathcal B}(\H)$ of all bounded operators.

Notice also that in order to simplify notation, in the sequel we will often only give proofs for $\H=\ell^2$, but they remain valid for an arbitrary $\H=\ell^2(\Gamma)$.

\section{Generalized numerical radius}
\subsection{Duality}
Let $\H$ be an infinite-dimensional separable Hilbert space.
As usual, we will identify the space ${\mathcal C}_1(\H)$ (the first Schatten class) of all trace class operators on $\H$, equipped with the trace-class norm $\norm T_1 = \tr(\abs T)$, with a predual of the space $B(\H)$, equipped with the uniform operator norm. 

Let us remind how to define a pairing $\langle \cdot,\cdot \rangle$ between ${\mathcal C}_1(\H)$ and $B(\H)$. 
If $C$ is a trace-class operator and $T$ is a bounded linear operator on $\H$, the value of $\langle C,T \rangle$ is given by
\begin{equation}
  \langle C,T\rangle = \tr(C T).
  \label{eq:pairing}
\end{equation}
One can now prove that elements of $B(\H)$ are exactly the bounded linear functionals on ${\mathcal C}_1(\H)$, and that ${\mathcal C}_1(\H)^\ast\cong B(\H)$.

The space ${\mathcal K}(\H)$ of all compact operators on $\H$, equipped with the uniform operator norm, is, under the above duality, the predual of ${\mathcal C}_1(\H)$. In particular, one has
\[{\mathcal K}(\H)^{\ast\ast}={\mathcal B}(\H).\]

For more on the duality and on the above classes of operators, see e.g. \cite{sakai}, 1.15 and 1.19.

In the finite dimensional situation ($\dim\H<\infty$), this duality allows to conveniently identify all the spaces ${\mathcal K}(\H)$, ${\mathcal C}_1(\H)$, ${\mathcal B}(\H)$, as well as its dual space ${\mathcal B}(\H)^\ast$, between themselves. In the infinite dimensions one has to treat them separately. 

\subsection{$C$-Numerical radius}
Fix an operator $C\in {\mathcal C}_1(\H)$. For any $A \in {\mathcal B}(\H)$ the generalized $C$-numerical range of $A$ is defined by:
\[ W_C(A)=\{\tr(CU^*AU) \colon U\in {\mathcal{U}}(\H) \},\]
and the   generalized $C$-numerical radius $r_C$ of $A$ is defined by
\[r_C(A)=\sup\{|\tr(CU^*AU)| \colon U\in {\mathcal{U}}(\H) \}.\]

\begin{theorem} 
  \label{th:r_c}
  $r_C(\cdot)$ is a norm in  ${\mathcal B}(\H)$ if and only if $C$ is non-scalar and $\tr(C)\neq 0$.
\end{theorem}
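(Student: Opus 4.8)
The plan is to separate the easy seminorm axioms from positive-definiteness, which carries all the content. For any unitary $U$ one has $\abs{\tr(CU^\ast A U)}\le \norm{C}_1\,\norm{A}$, and $A\mapsto \tr(CU^\ast A U)$ is linear, so $r_C$ is finite, absolutely homogeneous and subadditive; hence $r_C$ is always a seminorm, and the only question is whether $r_C(A)=0$ forces $A=0$. Writing $\tr(CU^\ast A U)=\tr\big((UCU^\ast)A\big)$ and recalling that $\langle\cdot,\cdot\rangle$ is the non-degenerate pairing identifying $\mathcal B(\H)$ with $\mathcal C_1(\H)^\ast$, I read $r_C(A)=0$ as the condition that $A$ annihilate the unitary orbit $\mathcal O(C)=\{UCU^\ast:U\in\mathcal U(\H)\}$, equivalently its closed linear span. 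Thus $r_C$ is a norm precisely when $\overline{\spn}\,\mathcal O(C)=\mathcal C_1(\H)$ in trace norm.

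For necessity I would exhibit explicit null vectors. If $\tr C=0$ then $\tr\big((UCU^\ast)I\big)=\tr C=0$ for every $U$, so $r_C(I)=0$ while $I\neq 0$; and if $C$ is scalar then $r_C$ vanishes on the nonzero trace-zero operators (in infinite dimensions a scalar trace-class operator is $0$ and $r_C\equiv 0$, while a general $C=\lambda I$ gives $r_C(A)=\abs{\lambda}\abs{\tr A}$). Either way $r_C$ fails to be a norm, so both hypotheses are genuinely needed.

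For sufficiency, assume $\tr C\neq 0$ and $C\notin\C I$, and suppose $\tr\big((UCU^\ast)A\big)=0$ for all $U$. The first step is a differentiation argument: fixing $U_0$ and a bounded skew-adjoint $X$, the curve $t\mapsto \tr\big(e^{tX}(U_0CU_0^\ast)e^{-tX}A\big)$ is identically zero and, since $C$ is trace class and $X$ is bounded, is differentiable in trace norm; differentiating at $t=0$ gives $\tr\big([X,U_0CU_0^\ast]A\big)=0$. As the complex span of the bounded skew-adjoint operators is all of $\mathcal B(\H)$, and $\tr([X,B]A)=\tr(X[B,A])$ by cyclicity of the trace, non-degeneracy of the pairing yields $[U_0CU_0^\ast,A]=0$. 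Hence $A$ commutes with every operator in $\mathcal O(C)$.

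The heart of the matter, and the step I expect to be the main obstacle since it replaces the finite-dimensional fact that the trace-zero matrices form an irreducible module under conjugation, is to show that this forces $A\in\C I$. I would prove the dichotomy that a von Neumann algebra $\mathcal M$ with $V\mathcal M V^\ast=\mathcal M$ for all $V\in\mathcal U(\H)$ is either $\C I$ or $\mathcal B(\H)$: if $\mathcal M\neq\C I$ it contains a non-scalar self-adjoint element, hence, via Borel functional calculus, a projection $E\notin\{0,I\}$; conjugating $E$ by a unitary supported on a two-dimensional subspace produces an element of $\mathcal M$ with a nonzero isolated eigenvalue, whose spectral projection, again lying in $\mathcal M$, is rank one, and the unitary conjugates of a rank-one projection generate $\mathcal B(\H)$ in the weak operator topology. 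Applying this to $\mathcal M=\{UCU^\ast:U\in\mathcal U(\H)\}''$, which contains the non-scalar $C$ and is conjugation-invariant, gives $\mathcal M=\mathcal B(\H)$, so its commutant $\{UCU^\ast:U\in\mathcal U(\H)\}'$, which contains $A$, equals $\C I$; thus $A=\lambda I$. Finally $0=\tr\big((UCU^\ast)A\big)=\lambda\,\tr C$ together with $\tr C\neq 0$ forces $\lambda=0$, i.e. $A=0$, completing the proof that $r_C$ is a norm.
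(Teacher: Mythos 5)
Your argument is correct, but it takes a genuinely different route from the paper's at the one step that carries all the content. Both proofs reduce positive-definiteness to showing that $\tr\bigl((UCU^\ast)A\bigr)=0$ for all unitary $U$ forces $A$ to be scalar, after which the endgame ($\lambda\,\tr C=0$ with $\tr C\neq 0$, hence $\lambda=0$) is identical. The paper gets the key step in one line: the hypothesis says the $C$-numerical range $W_C(A)$ is a singleton, and Lemma 4.2 of Kuzma--Li--Rodman is cited to conclude that $A$ is scalar. You instead prove it from scratch: differentiating $t\mapsto\tr\bigl(e^{tX}U_0CU_0^\ast e^{-tX}A\bigr)$ at $t=0$ and using cyclicity plus non-degeneracy of the trace pairing to get $[U_0CU_0^\ast,A]=0$, then invoking the dichotomy that a unitarily invariant von Neumann subalgebra of ${\mathcal B}(\H)$ is either $\C I$ or all of ${\mathcal B}(\H)$ to identify the commutant of the unitary orbit of $C$ with $\C I$. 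This buys self-containedness at the cost of von Neumann algebra machinery (double commutants, Borel functional calculus, rank-one projections generating ${\mathcal B}(\H)$ weakly); your sketch of the dichotomy is slightly loose --- it is the difference $E-VEV^\ast$, not the conjugate $VEV^\ast$ itself, that has the isolated nonzero eigenvalue with rank-one spectral projection --- but that is a standard fact and the repair is routine. You also make the necessity direction explicit with concrete null vectors ($I$ when $\tr C=0$, and the degeneration of scalar trace-class operators in infinite dimensions), which the paper leaves implicit.
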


\begin{proof}
  From the definition of $r_C(A)$ we have that $r_C(A) \geq 0$ for all $A \in {\mathcal B}(\H)$, so we need to show that if $r_C(A)=0$ then $A=0$. Suppose $C$ is non-scalar and $\tr(C)\neq 0$, then if $r_C(A)=0$ we have that $\tr(CU^*AU)=0$ for all unitary $U$, this implies that $W_C(A)$ is a singleton. By lemma 4.2 in  \cite{kuzmaLiRodman}, we have that $A$ is a scalar operator, say $A=\lambda I$, where $\lambda$ is a scalar and $I$ denotes the identity operator.

Therefore we have that: $0=\tr(CU^*AU)=\lambda\tr(C)$, since $\tr(C)\neq 0$, this implies that $\lambda = 0$ and therefore $A$ is the zero operator.

One can easily verify that $r_C(\alpha A)=|\alpha| r_C(A)$, for all $A \in {\mathcal B}(\H)$ and $r_C(A+B)\leq r_C(A)+ r_C(B)$, for all $A, B \in {\mathcal B}(\H)$.
\end{proof}

Let us put the generalized numerical radius in the context of duality. Clearly,
\[r_C(A) = \sup_T {\mathrm Re}\,\langle C,A\rangle,\]
where the supremum is taken over the saturated unitary orbit of $A$ in ${\mathcal C}_1(\H)$:
\[{\mathcal{SU}}(A)=\{\lambda U^\ast A U\colon \lambda\in\C,~\abs\lambda =1,~U\in {\mathcal{U}}(\H)\}.\]
That is to say,
\[r_C(A) = \sup_{\begin{matrix}\abs\lambda =1 \\ U\in {\mathcal{U}}(\H)\end{matrix}} {\mathrm{Re}}(\tr(\lambda  U C^\ast  U^\ast A)).\]
Equivalently, $r_C(A)$ is the supremum of the linear functional $\langle -,A\rangle$ over the convex circled hull of the unitary orbit of $C$.
In the case where $C$ is non-scalar and $\tr(C)\neq 0$, Theorem \ref{th:r_c} implies that this hull is absorbing and bounded in the vector space ${\mathcal C}_1(\H)$, so is the unit ball of a certain norm, $r_C^\ast$. The dual norm to $r_C^\ast$ is the norm $r_C$ on ${\mathcal B}(\H)$.

We do not know if in this generality the norm $r_C^\ast$ is the dual norm to the restriction of $r_C$ to ${\mathcal K}(\H)$, nor whether the norm $r_C$ is equivalent to the uniform operator (spectral) norm on ${\mathcal B}(\H)$. In particular, we cannot assert that the norms $r_C$ and $r_C^\ast$ are complete. However, the answers to all of these questions are positive in the special case of a $q$-numerical radius.

\subsection{$q$-Numerical radius}

In the important particular case where $\H=\ell^2$ and $C=E_{11}$ one recovers the classical numerical radius:
\[r(A) = \sup_{\norm x =1}\abs{\langle Ax, x\rangle}.\]
It is well known that the spectral radius is a norm on ${\mathcal B}(\H)$ equivalent to the uniform norm, moreover
\[r(A)\leq \norm A \leq 2 r(A).\]
(See e.g. Th. 2.14 in \cite{kubrusly}.) 

The case of interest for us is a more general case of the $q$-numerical radius, where $0<q\leq 1$:
\[r_q(A) = \sup_{\norm x =\norm y =1,\langle x,y\rangle =q}\abs{\langle Ax, y\rangle}.\]
In this case, the matrix $C$ assumes the form
\[C_q=qE_{11}+\sqrt{1-q^2}E_{12}.\]

Geometric considerations in a two-dimensional Euclidean space imply the following.

\begin{lemma}
  For all $A\in {\mathcal B}(\H)$,
  \[  r_q(A)\geq q r(A).\]
\end{lemma}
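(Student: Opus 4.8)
The plan is to produce, for each unit vector $x$, a pair of admissible vectors for the $q$-numerical radius whose values recover $q\abs{\langle Ax,x\rangle}$, and then take the supremum over $x$. Since $\H$ is infinite-dimensional (indeed $\dim\H\ge 2$ suffices), for any unit vector $x$ I can pick a unit vector $z$ orthogonal to $x$. Because $q$ and $\sqrt{1-q^2}$ are real and $\langle x,z\rangle = 0$, the two vectors
\[ y_{\pm} = qx \pm \sqrt{1-q^2}\,z \]
each satisfy $\norm{y_\pm}^2 = q^2 + (1-q^2) = 1$ and $\langle x, y_\pm\rangle = q$, so both pairs $(x,y_+)$ and $(x,y_-)$ are admissible in the definition of $r_q$.

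Next I would expand, writing $w = q\langle Ax,x\rangle$ and $c = \sqrt{1-q^2}\,\langle Ax,z\rangle$, so that
\[ \langle Ax, y_{\pm}\rangle = w \pm c. \]
The geometric heart of the argument is the elementary fact that for complex numbers $w,c$ one has $\max(\abs{w+c},\abs{w-c}) \ge \tfrac12(\abs{w+c}+\abs{w-c}) \ge \tfrac12\abs{(w+c)+(w-c)} = \abs{w}$; this is exactly the statement that a segment centered at $w$ has an endpoint at least as far from the origin as $w$ itself. Applying it gives
\[ \max\bigl(\abs{\langle Ax,y_+\rangle},\,\abs{\langle Ax,y_-\rangle}\bigr) \ge \abs{w} = q\,\abs{\langle Ax,x\rangle}. \]
Since both values on the left are bounded above by $r_q(A)$, I conclude $r_q(A) \ge q\abs{\langle Ax,x\rangle}$ for every unit vector $x$, and taking the supremum over $x$ yields $r_q(A)\ge q\,r(A)$.

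The only point requiring care --- and the reason the $\pm$ construction is needed rather than a single choice of $y$ --- is the cross term $c = \sqrt{1-q^2}\,\langle Ax,z\rangle$, which cannot in general be made to vanish: in a genuinely two-dimensional situation the orthocomplement of $x$ is one-dimensional, so $z$ is forced and $\langle Ax,z\rangle$ may be nonzero and could a priori diminish $\abs{\langle Ax,y\rangle}$. Averaging over the two signs (via the inequality above) sidesteps this obstacle entirely, since the worst the cross term can do is shift the value toward one endpoint while the other endpoint stays at distance $\ge\abs{w}$ from the origin. No finer analysis of $A$ or of $z$ is needed.
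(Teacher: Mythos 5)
Your proof is correct, and it tests the supremum defining $r_q$ with essentially the same family of admissible pairs as the paper does, namely $(x,\,qx\pm\sqrt{1-q^2}\,z)$ with $z\perp x$. Where you differ is in how the cross term is dispatched: the paper works in the real plane spanned by $\xi$ and $A\xi$, reduces to the case $\langle A\xi,\xi\rangle=\pm\alpha$ real, and picks the sign of $\zeta=(\pm\sqrt{1-q^2},q)$ so that the cross term $\sqrt{1-\alpha^2}\sqrt{1-q^2}$ adds constructively, yielding $\lvert\langle A\xi,\zeta\rangle\rvert=\sqrt{1-\alpha^2}\sqrt{1-q^2}+\alpha q\ge \alpha q$. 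You instead keep $w=q\langle Ax,x\rangle$ and $c=\sqrt{1-q^2}\langle Ax,z\rangle$ as arbitrary complex numbers and invoke $\max(\abs{w+c},\abs{w-c})\ge\abs{w}$. Your route is arguably the cleaner and more airtight of the two: it needs no phase rotation to make $\langle Ax,x\rangle$ real, no normalization of $\norm{Ax}$ (which the paper tacitly assumes when writing $A\xi=(\sqrt{1-\alpha^2},\pm\alpha)$), and no quadrant bookkeeping, at the cost of discarding the nonnegative contribution of the cross term that the paper's sign choice exploits --- a contribution that is irrelevant for the stated inequality anyway.
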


\begin{proof}
  Let $\norm\xi=1$ and $\langle A\xi,\xi\rangle =\pm\alpha$, where $\alpha>0$. Identify $\xi$ with the second coordinate vector $e_2$ of the plane $\R^2$ spanned by $\xi,A\xi$, so that $A\xi$ belongs to either the first or the fourth quadrant ($\langle A\xi,e_1\rangle \geq 0$). Thus, $A\xi = (\sqrt{1-\alpha^2},\pm\alpha)$.
  Choose $\zeta$ with $\norm\zeta=1$, $\langle \xi,\zeta\rangle = q$, and $\zeta$ belonging to the first quadrant if so does $A\xi$, or the second quadrant if $A\xi$ is in the fourth. One can write $\zeta = (\pm\sqrt{1-q^2},q)$. Now one has
  \begin{eqnarray*}
    \lvert\langle A\xi,\zeta\rangle\rvert &=& \lvert \pm\sqrt{1-\alpha^2}\sqrt{1-q^2} \pm \alpha q\rvert \\
    &=& \sqrt{1-\alpha^2}\sqrt{1-q^2} + \alpha q,
    \end{eqnarray*}
  whence the result follows.
\end{proof}

We conclude: $r_q(A)\geq (q/2)\norm A$. At the same time it is evident that $r_q(A)\leq \norm A$. Thus, the norm $r_q$ on ${\mathcal B}(\H)$ is equivalent to the uniform (spectral) norm. 

As a consequence, ${\mathcal K}(\H)$ is weakly dense in ${\mathcal B}(\H)$ relative to the $q$-numerical radius (because the same is true of the uniform operator norm).
It follows that the norm $r_q^\ast$ on ${\mathcal C}_1(\H)$ as defined earlier is the dual norm to the restriction of $r_q$ on ${\mathcal K}(\H)$.
In particular, $r_q^\ast$ is equivalent to the trace class norm on ${\mathcal C}_1(\H)$. We also conclude that the norm $r_q$ on ${\mathcal B}(\H)$ and on ${\mathcal K}(\H)$ and the norm $r_q^\ast$ on ${\mathcal C}_1(\H)$ are  complete. 

On the space of compact operators, sharper equivalence constants follow from the results of \cite{LMR}.

\begin{theorem}
  The norm $r_q$ on ${\mathcal K}(\H)$ is equivalent to the uniform operator norm, with the constants
  \begin{equation}
    \label{eq:eq}
    r_q(A)\leq \norm A \leq \beta r_q(A),~~A\in {\mathcal B}(\H),\end{equation}
  where
  \[\beta=\begin{cases}
    \max\{1/p,1/q\}&\mbox{ if }\frac 12\leq p,\\
    \sqrt{5-4p}/\abs q,&\mbox{ if } \frac 14\leq p <\frac 12,\\
    2/ q,&\mbox{ otherwise.}
  \end{cases}\]
\end{theorem}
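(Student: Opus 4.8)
The plan is to prove the two stated inequalities separately: the left-hand bound $r_q(A)\leq\norm A$ is elementary, while the right-hand bound $\norm A\leq\beta r_q(A)$ is obtained by compressing $A$ to finite-dimensional subspaces and invoking the finite-dimensional estimate of \cite{LMR}.

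First I would dispose of the inequality $r_q(A)\leq\norm A$. For any unit vectors $x,y$ with $\langle x,y\rangle = q$, the Cauchy--Schwarz inequality gives $\abs{\langle Ax,y\rangle}\leq\norm{Ax}\,\norm y\leq\norm A$; taking the supremum over all admissible pairs $(x,y)$ yields $r_q(A)\leq\norm A$ straight from the definition of $r_q$.

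For the inequality $\norm A\leq\beta r_q(A)$ the strategy is a finite-dimensional reduction. Given a finite-dimensional subspace $V\subseteq\H$, let $P_V$ be the orthogonal projection onto $V$ and let $A_V=P_VAP_V$, regarded as an operator on the finite-dimensional Hilbert space $V$. Two observations drive the argument. First, the $q$-numerical radius is monotone under compression: if $x,y\in V$ are unit vectors with $\langle x,y\rangle = q$, then $\langle A_Vx,y\rangle = \langle Ax,y\rangle$, so every pair admissible for $r_q(A_V)$ is also admissible for $r_q(A)$, whence $r_q(A_V)\leq r_q(A)$. Second, the operator norm is recovered as the supremum of the norms of the compressions: given $\ve>0$, choose a unit vector $x$ with $\norm{Ax}>\norm A-\ve$ and take $V$ to be any two-dimensional subspace containing $x$ and $Ax$; then $A_Vx=P_VAx=Ax$, so $\norm{A_V}\geq\norm A-\ve$, while $\norm{A_V}\leq\norm A$ trivially. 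Hence $\norm A=\sup_V\norm{A_V}$, the supremum running over two-dimensional subspaces.

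With these observations I would apply the finite-dimensional theorem of \cite{LMR}, which asserts exactly $\norm B\leq\beta\, r_q(B)$ for an operator $B$ on a finite-dimensional space, with the stated constant $\beta$ depending only on $q$ and not on the dimension. Taking $B=A_V$ and combining with the two observations gives
\[\norm{A_V}\leq\beta\, r_q(A_V)\leq\beta\, r_q(A)\]
for every finite-dimensional $V$; passing to the supremum over $V$ produces $\norm A\leq\beta r_q(A)$, valid for all $A\in\mathcal B(\H)$ as claimed. I expect the main obstacle to lie not in the reduction itself, which is routine once the compression identities $\langle A_Vx,y\rangle=\langle Ax,y\rangle$ and $A_Vx=Ax$ are noted, but in the dimension-free finite-dimensional estimate imported from \cite{LMR}: it is the exact value of $\beta$ together with its independence of the dimension that legitimizes the passage to the supremum over compressions. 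A secondary point to verify is that every complex space of dimension at least two admits unit vectors $x,y$ with $\langle x,y\rangle=q$ for each $0<q\leq1$, so that $r_q(A_V)$ is a nondegenerate quantity and the finite-dimensional theorem genuinely applies to $A_V$.
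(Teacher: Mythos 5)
Your proof is correct, but it reaches the finite-dimensional input of \cite{LMR} by a different mechanism than the paper does. The paper fixes an orthonormal basis, observes that the inequality $\norm A\leq\beta r_q(A)$ holds for operators whose matrices have finitely many nonzero entries (these are, in effect, finite-dimensional operators padded by zero), and then passes to the uniform-norm completion of that linear space, which is ${\mathcal K}(\H)$; both $r_q$ and $\norm\cdot$ are $1$-Lipschitz for the uniform norm, so the inequality survives the limit. You instead compress: $A_V=P_VAP_V$ satisfies $r_q(A_V)\leq r_q(A)$ because $\langle A_Vx,y\rangle=\langle Ax,y\rangle$ for $x,y\in V$, and $\norm A=\sup_V\norm{A_V}$ over two-dimensional $V$, so the dimension-free constant $\beta$ of \cite{LMR}, Theorem 3.5 passes to the supremum. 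Both arguments stand or fall with the dimension-independence of $\beta$, which you correctly flag as the real content being imported. The practical difference is that your compression argument applies verbatim to every $A\in{\mathcal B}(\H)$, not only to compact operators, which actually matches the displayed inequality ``$A\in{\mathcal B}(\H)$'' in the statement better than the paper's density argument does (finite matrices are uniformly dense only in ${\mathcal K}(\H)$); the paper's route is slightly shorter but is confined to the compact case. Your secondary verification --- that any complex subspace of dimension at least two contains unit vectors $x,y$ with $\langle x,y\rangle=q$, e.g.\ $x=e_1$, $y=qe_1+\sqrt{1-q^2}\,e_2$ --- is indeed needed and is immediate.
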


\begin{proof}
  The above result was established in the case $\dim\H<\infty$ in \cite{LMR}, Theorem 3.5. If we fix an orthonormal basis for $\H$, the inequalities in (\ref{eq:eq}) will hold for all operators $A$ represented by matrices with finitely many entries. The same is true for the operators belonging to the norm completion of the linear space of such operators, that is, ${\mathcal K}(\H)$. 
\end{proof}
 
 \section{Saturated unitary orbit}
 
 \begin{theorem} An operator $A \in {\mathcal B}(\H)$ belongs to ${\mathcal{SU}}(C_q)$ if and only if $A$ is a rank one operator, $|\textrm{tr}(A)|=|q|$ and $\norm{A}_2=1$ (in the Hilbert-Schmidt, or Frobenius, norm).
  \label{th:cq}
\end{theorem}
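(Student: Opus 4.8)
The plan is to treat the two implications separately: the forward one is a direct verification of invariants, while the converse requires constructing a suitable unitary.

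For the direction asserting that every element of $\SCq$ has the three stated properties, I would simply check that all three quantities are constant along the orbit. If $A=\lambda U^\ast C_q U$ with $\abs\lambda=1$ and $U\in\mathcal{U}(\H)$, then $A$ is rank one, because unitary conjugation preserves rank and multiplication by the nonzero scalar $\lambda$ does too; cyclicity of the trace gives $\tr(A)=\lambda\,\tr(C_q)=\lambda q$, so $\abs{\tr A}=q$; and unitary invariance of the Hilbert--Schmidt norm together with $\abs\lambda=1$ gives $\norm A_2=\norm{C_q}_2=\sqrt{q^2+(1-q^2)}=1$. This settles one inclusion.

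For the converse I would write a rank-one operator as $A=u\otimes v$, meaning $Ax=\langle x,v\rangle u$, and record the two identities $\tr A=\langle u,v\rangle$ and $\norm A_2=\norm u\,\norm v$. The hypothesis $\norm A_2=1$ lets me normalize $u$ and $v$ to unit vectors, and then $\abs{\tr A}=\abs{\langle u,v\rangle}=q$, say $\langle u,v\rangle=qe^{i\alpha}$. The operator $C_q$ itself has the form $e_1\otimes v_0$ with $v_0=qe_1+\sqrt{1-q^2}\,e_2$, where $\langle e_1,v_0\rangle=q$ is real. The role of the scalar $\lambda$ is precisely to absorb the phase $\alpha$: setting $\lambda=e^{i\alpha}$ and $p=e^{-i\alpha}u$, the pair $(p,v)$ consists of unit vectors with $\langle p,v\rangle=q$, so $(p,v)$ and $(e_1,v_0)$ have the same Gram matrix $\left(\begin{smallmatrix}1&q\\ q&1\end{smallmatrix}\right)$.

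The crux is then to produce a unitary $U\in\mathcal{U}(\H)$ with $U^\ast e_1=p$ and $U^\ast v_0=v$. Because the assignment $e_1\mapsto p$, $v_0\mapsto v$ preserves inner products, it is an isometry between two subspaces of equal finite dimension, and such an isometry extends to a unitary of $\H$ exactly when the two orthogonal complements have the same Hilbert-space dimension — which is automatic here, since $\H$ is infinite-dimensional and both complements have dimension $\dim\H$. With this $U$ in hand, the computation $\lambda U^\ast C_q U=\lambda\big((U^\ast e_1)\otimes(U^\ast v_0)\big)=e^{i\alpha}(p\otimes v)=(e^{i\alpha}p)\otimes v=u\otimes v=A$ finishes the proof; the degenerate case $q=1$, where $v_0=e_1$ and the relevant span is one-dimensional, is handled identically. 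I expect the extension-to-a-unitary step to be the only genuinely substantive point — the Gram-matrix bookkeeping and the trace and Hilbert--Schmidt formulas are routine, though one must keep the inner-product convention consistent when evaluating $\tr(u\otimes v)$.
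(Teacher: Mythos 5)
Your proposal is correct and follows essentially the same route as the paper: a direct verification of the three unitary/scalar invariants for the forward direction, and for the converse a reduction of $A=u\otimes v$ to unit vectors with prescribed inner product followed by the construction of a unitary carrying $(e_1,v_0)$ to that pair (the paper builds it concretely via Gram--Schmidt, taking $z=y-qx$, while you invoke the Gram-matrix/isometry-extension argument, which is the same idea packaged abstractly). If anything, your explicit absorption of the phase $e^{i\alpha}$ into $\lambda$ is slightly more careful than the paper, whose converse silently assumes $\tr(A)=q$ rather than $\abs{\tr(A)}=q$.
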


\begin{proof} If $A \in {\mathcal{SU}}(C_q)$, then there exist a scalar $\theta$, $|\theta|=1$ and an unitary operator $U \in  {\mathcal{U}}(\H)$ such that $A=\theta U^*C_qU$. Since $U$ is a unitary operator and $C_q$ is a rank one operator of Hilbert-Schmidt norm 1 and $\textrm{tr}(C_q)=q$, we have that $A$ is a rank one operator, $|\textrm{tr}(A)|=|q|$ and $||A||=1$. 

  Suppose now that $A$ is a rank one operator, $\textrm{tr}(A)=q$ and $\norm{A}_2=1$. Let's show that $A\in {\mathcal{SU}}(C_q)$.

Since $A$ is a rank one operator, there exists $x,y \in \mathcal{H}$ such that $A=x\otimes y^*$.
We can suppose without loss of generality that $||x||=1$. Since $\textrm{tr}(A)=q$, from the definition of trace of a rank one operator we have that $\langle x,y\rangle=q$. Since $1=||A||_2=||x||||y||$, we also have that $||y||=1$.

If $|q|=1$, from $\langle x,y\rangle=q$ we have that there exists $\theta \in \C$, $|\theta|=1$ such that $y=\theta x$. Let $U\in {\mathcal{U}}(\H)$ such that $U^*x=e_1$, then $U^*AU=U^*(x\otimes  y^*)U= \bar{\theta} e_1\otimes e_1^*=\bar{\theta} E_{11}=\bar{\theta} C_1$ and therefore $A\in {\mathcal{SU}}(C_q)$.

Suppose now $|q| <1$. In this case $x$ and $y$ are linearly independent. Let $z=y-\left\langle y,x\right\rangle x=y- q x$. Then $z$ and $x$ are orthogonal and moreover $||z||^2=1-|q|^2$. Once again we can find $U\in {\mathcal{U}}(\H)$ such that $U^*x=||x||e_1=e_1$ and $U^*z=||z||e_2=\sqrt{1-|q|^2}e_2$. Therefore $U^*AU=U^*(x\otimes  y^*)U=e_1\otimes(\sqrt{1-|q|^2}e_2+qe_1)^*=C_q$. Therefore $A \in {\mathcal{SU}}(C_q)$. 
\end{proof}

 \begin{lemma}
    \label{l:rk}
    Let $k\in\mathbb N$. The set ${\mathfrak R}_k$ of all operators from $\ell^2$ to itself having rank $\leq k$ is closed in the strong operator topology.
\end{lemma}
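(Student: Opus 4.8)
The plan is to reduce the rank condition to the non-vanishing of a determinant, which turns a subtle topological statement into an elementary continuity argument. First I would recall two standard facts. An operator $T$ has rank $\geq k+1$ if and only if there exist vectors $x_0,\dots,x_k\in\ell^2$ whose images $Tx_0,\dots,Tx_k$ are linearly independent; and a finite family of vectors in a Hilbert space is linearly independent precisely when its Gram determinant $\det\big(\langle Tx_i,Tx_j\rangle\big)_{i,j=0}^k$ is nonzero. Thus membership in ${\mathfrak R}_k$ is governed entirely by the behaviour of finitely many scalar quantities.

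The main step is then a contradiction argument phrased with nets. Suppose $(T_\alpha)$ is a net in ${\mathfrak R}_k$ converging in the strong operator topology to some $T$, and suppose toward a contradiction that $\mathrm{rank}(T)\geq k+1$. Choosing $x_0,\dots,x_k$ so that $Tx_0,\dots,Tx_k$ are independent, strong convergence gives $T_\alpha x_i\to Tx_i$ in norm for each $i$, hence $\langle T_\alpha x_i,T_\alpha x_j\rangle\to\langle Tx_i,Tx_j\rangle$, and by continuity of the determinant $\det\big(\langle T_\alpha x_i,T_\alpha x_j\rangle\big)\to\det\big(\langle Tx_i,Tx_j\rangle\big)\neq 0$. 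Consequently, for $\alpha$ large the vectors $T_\alpha x_0,\dots,T_\alpha x_k$ are independent, forcing $\mathrm{rank}(T_\alpha)\geq k+1$ and contradicting $T_\alpha\in{\mathfrak R}_k$. Equivalently, one may observe that for each fixed tuple the map $S\mapsto\det\big(\langle Sx_i,Sx_j\rangle\big)_{i,j}$ is SOT-continuous, so the set of $S$ making these images independent is the SOT-open preimage of $\C\setminus\{0\}$; the complement of ${\mathfrak R}_k$ is the union of such open sets over all tuples, hence open.

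The one point that genuinely needs care — and the reason I would work with nets rather than sequences — is that ${\mathfrak R}_k$ is unbounded in the operator norm, so the strong operator topology restricted to it is not metrizable and a purely sequential argument is not a priori adequate to establish closedness. I expect this to be the only real obstacle, and the Gram-determinant reformulation dissolves it: it replaces the rank hypothesis by the continuity of finitely many scalar functions of $S$, after which everything rests on the elementary fact that the non-vanishing of a determinant is an open condition, a fact wholly insensitive to metrizability.
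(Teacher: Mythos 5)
Your proof is correct, and it reaches the same structural conclusion as the paper --- the complement of ${\mathfrak R}_k$ is SOT-open because linear independence of the images $Sx_0,\dots,Sx_k$ of finitely many fixed vectors is an open condition on $S$ --- but the device you use to verify that one step is different. The paper first normalizes by choosing the $k+1$ vectors $Tx_i$ to be \emph{orthonormal} in the range of $T$, and then shows by a direct norm estimate that any $S$ with $\norm{Sx_i-Tx_i}$ small enough still has independent images (a perturbation of an orthonormal system by vectors of total length at most $1/2$ cannot destroy independence). You instead invoke the Gram determinant: $S\mapsto\det\bigl(\langle Sx_i,Sx_j\rangle\bigr)$ is SOT-continuous because $S\mapsto Sx_i$ is SOT-to-norm continuous and the inner product is jointly norm-continuous, so non-vanishing of the determinant is open. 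Your route avoids the paper's quantitative estimate (and its choice of the constant $1/(2n)$) at the cost of invoking the Gram criterion; the paper's route is more hands-on but needs the orthonormalization to make the estimate clean. Your closing remark about nets versus sequences is well taken and is in fact why both you and the paper phrase the argument in terms of open neighbourhoods rather than sequential closure; the open-preimage formulation you give at the end disposes of the metrizability issue entirely.
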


\begin{proof}
  For a $T\notin {\mathfrak R}_k$, choose $k+1$ orthonormal vectors in the range of $T$: 
  \[T(x_i)=y_i, ~~~i=1,2,\ldots,k+1,y_i\perp y_j,~~i\neq j.\]
  Define an open neighbourhood $V$ of $T$ in the strong operator topology by the condition
  \[\norm{S(x_i) - y_i}<\frac 1{2n}.\]
  For any $S\in V$ the vectors $z_i=S(x_i)$ are linearly independent. Indeed, let us consider a nontrivial linear combination $\sum_{i=1}^n\lambda_i z_i$, assuming also $\sum\lambda_i^2=1$.  Now,
  \begin{eqnarray*}
    \sum_{i=1}^n\lambda_i z_i &=& \sum_{i=1}^n\lambda_i y_i + \sum_{i=1}^n\lambda_i (S(x_i) - y_i).
    \end{eqnarray*}
  The norm of the first vector on the r.h.s. is $1$, while the norm of the second is bounded by
  \[\sum_{i=1}^n\abs{\lambda_i} \norm{S(x_i) - y_i} \leq\frac 12.\]
\end{proof}
 
\begin{proposition}
  The saturated unitary orbit ${\mathcal{SU}}(C_q)$ is closed in the trace class norm on $\mathcal C_1(\H)$.
  \label{p:traceclosed}
\end{proposition}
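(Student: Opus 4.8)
The plan is to reduce everything to the characterization of $\SCq$ furnished by Theorem~\ref{th:cq}, and then to check that each of its three defining properties — rank one, $\abs{\tr(A)}=\abs q$, and $\norm A_2=1$ — survives passage to a limit in the trace class norm. Since the trace class norm makes $\mathcal C_1(\H)$ a metric space, closedness can be tested with sequences: I take $A_n\in\SCq$ with $\norm{A_n-A}_1\to 0$ for some $A\in\mathcal C_1(\H)$, and I verify that $A$ again satisfies the three conditions, so that Theorem~\ref{th:cq} applies.

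The two ``metric'' conditions are soft, and follow from the standard chain $\norm B\leq\norm B_2\leq\norm B_1$ valid for any trace class $B$, together with continuity of the trace. For the trace, $\abs{\tr(A_n)-\tr(A)}=\abs{\tr(A_n-A)}\leq\norm{A_n-A}_1\to 0$, so $\abs{\tr(A)}=\lim_n\abs{\tr(A_n)}=\abs q$. For the Hilbert–Schmidt norm, $\norm{A_n-A}_2\leq\norm{A_n-A}_1\to 0$, whence $\norm A_2=\lim_n\norm{A_n}_2=1$; in particular $A\neq 0$.

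The remaining point — and the one that actually uses the earlier preparation — is that $A$ has rank one. Here I would invoke Lemma~\ref{l:rk} with $k=1$. Because $\norm{A_n-A}\leq\norm{A_n-A}_1\to 0$, the convergence $A_n\to A$ holds in the uniform operator norm, hence a fortiori in the strong operator topology. Each $A_n$ lies in ${\mathfrak R}_1$, which by Lemma~\ref{l:rk} is strongly closed, so $A\in{\mathfrak R}_1$, i.e. $A$ has rank $\leq 1$. Since $\norm A_2=1$ forces $A\neq 0$, the rank is exactly one. With all three conditions verified, Theorem~\ref{th:cq} yields $A\in\SCq$, proving that $\SCq$ is closed in the trace class norm.

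I expect the rank condition to be the sole genuine obstacle. Rank is not a continuous function, and the naive slogan ``a limit of rank-one operators is rank one'' is false for general limit procedures; what rescues the argument is precisely the strong-operator closedness of bounded-rank sets recorded in Lemma~\ref{l:rk}, combined with the domination $\norm{\cdot}\leq\norm{\cdot}_1$ that transports trace-norm convergence into the strong operator topology where that lemma lives. The trace and Hilbert–Schmidt conditions, by contrast, are immediate consequences of norm continuity and require no such care.
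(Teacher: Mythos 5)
Your proof is correct and follows essentially the same route as the paper: both decompose $\SCq$ via Theorem~\ref{th:cq} into the rank condition (handled by Lemma~\ref{l:rk} together with $\norm{\cdot}\leq\norm{\cdot}_1$), the Hilbert--Schmidt sphere condition, and the trace condition, each closed under trace-norm convergence. Your explicit remark that $\norm A_2=1$ rules out $A=0$, so the limit has rank exactly one rather than rank $\leq 1$, is a small point the paper leaves implicit.
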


\begin{proof}
  According to Theorem \ref{th:cq}, the set ${\mathcal{SU}}(C_q)$ is the intersection of the set ${\mathfrak R}_1$ of all rank one operators (closed in the strong operator topology by Lemma \ref{l:rk}, therefore in the trace class norm topology), the unit sphere with regard to the Hilbert-Schmidt norm (which is closed in the Hilbert-Schmidt norm topology, therefore in the finer trace class norm topology), and the level surface of the function $T\mapsto \abs{\tr(T)}$ which is continuous with regard to the trace class norm.
\end{proof}

\begin{lemma}
The trace class metric and Hilbert-Schmidt metric are Lipschitz equivalent on the set ${\mathfrak R}_k$ of all operators of rank $\leq k$, with constants which only depend on $k$: for all $T,S\in {\mathfrak R}_k$,
  \[\norm{T-S}_2\leq \norm{T-S}_1\leq \sqrt{2k}\norm{T-S}_2.\]
  \label{l:frank}
\end{lemma}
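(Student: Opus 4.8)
The plan is to reduce both inequalities to elementary comparisons between the $\ell^1$- and $\ell^2$-norms of the singular value sequence of the difference $R = T-S$. The left-hand inequality $\norm{T-S}_2 \leq \norm{T-S}_1$ needs no rank hypothesis at all: writing the singular values of an arbitrary trace class operator $R$ as $\sigma_1\geq \sigma_2\geq\cdots\geq 0$, one has $\norm{R}_2^2 = \sum_i \sigma_i^2 \leq \bigl(\sum_i \sigma_i\bigr)^2 = \norm{R}_1^2$, since every $\sigma_i\geq 0$ and all cross terms in the expanded square are nonnegative.

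For the right-hand inequality I would first invoke subadditivity of rank: as $T$ and $S$ each have rank at most $k$, the difference $R = T-S$ has rank at most $2k$, so only the finitely many singular values $\sigma_1,\ldots,\sigma_{2k}$ can be nonzero. The Cauchy--Schwarz inequality, applied to the vectors $(\sigma_1,\ldots,\sigma_{2k})$ and $(1,\ldots,1)$ in $\R^{2k}$, then yields
\[\norm{R}_1 = \sum_{i=1}^{2k}\sigma_i \leq \sqrt{2k}\,\Bigl(\sum_{i=1}^{2k}\sigma_i^2\Bigr)^{1/2} = \sqrt{2k}\,\norm{R}_2,\]
which is exactly the claimed bound with a constant depending only on $k$.

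I do not expect any genuine obstacle. The only point that must be noticed --- and the reason the constant depends on $k$ rather than being universal --- is that the difference of two operators of rank $\leq k$ need not itself have rank $\leq k$, but always has rank $\leq 2k$; hence the Cauchy--Schwarz step has to be carried out with $2k$ terms, producing the factor $\sqrt{2k}$. Everything else is the standard comparison of $\ell^1$ and $\ell^2$ norms on a sequence of nonnegative reals.
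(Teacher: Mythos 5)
Your proof is correct and follows essentially the same route as the paper: both reduce to the observation that $T-S$ has rank at most $2k$, hence at most $2k$ nonzero singular values, and then compare the $\ell^1$ and $\ell^2$ norms of that finite vector. You are slightly more explicit (writing out the Cauchy--Schwarz step and the elementary bound $\sum\sigma_i^2\leq(\sum\sigma_i)^2$ for the left inequality), but there is no substantive difference.
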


\begin{proof}
  Recall that, given a compact operator $T$, one can write
  \begin{equation}
    \label{eq:repr}
    T(x) = \sum_{i=1}^\infty \lambda_i \langle x,v_i\rangle u_i,\end{equation}
  where $(v_i)$ and $(u_i)$ are suitably chosen orthonormal bases of $\H$. Now the trace of $T$ is given by
  \[\tr(T) =\sum_i \lambda_i,\]
  the trace class norm by
  \[\norm{T}_1 =\sum_i\abs{\lambda_i},\]
  and the Hilbert-Schmidt norm by
  \[\norm{T}_2 = \left(\sum_i\abs{\lambda_i}^2\right)^{1/2},\]
  where the above quantities are independent of the choice of representation (\ref{eq:repr}).
  
  If $T,S\in {\mathfrak R}_k$, then the operator $T-S$ has rank $\leq 2k$, and therefore the number of non-zero coefficients $\lambda_i$ in the sum (\ref{eq:repr}) representing $T-S$ is at most $2k$. The values $\norm{T-S}_1$ and $\norm{T-S}_2$ become the values of the $\ell_1$ norm and $\ell_2$ norm respectively of the same vector 
in the standard vector space of dimension $2k$. But these are Lipschitz equivalent, with constants which only depend on the dimension.
\end{proof}

\begin{corollary}
  The trace class norm topology and the Hilbert-Schmidt norm topology coincide on the saturated unitary orbit ${\mathcal{SU}}(C_q)$. Moreover, the corresponding metrics on the orbit are Lipschitz equivalent. 
\end{corollary}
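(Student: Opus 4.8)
The plan is to reduce the entire statement to a single application of Lemma~\ref{l:frank}. First I would recall, via Theorem~\ref{th:cq}, that every element of ${\mathcal{SU}}(C_q)$ is a rank one operator, so that the orbit is entirely contained in the set ${\mathfrak R}_1$ of operators of rank at most one. This containment is the only genuine input needed beyond the preceding lemma, and it is already in hand.

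Once the inclusion ${\mathcal{SU}}(C_q)\subseteq {\mathfrak R}_1$ is recorded, both assertions follow at once. For the Lipschitz equivalence of the metrics, I would simply specialize Lemma~\ref{l:frank} to $k=1$: for any two operators $T,S\in {\mathfrak R}_1$, and in particular for any $T,S\in {\mathcal{SU}}(C_q)$, one has
\[\norm{T-S}_2\leq \norm{T-S}_1\leq \sqrt 2\,\norm{T-S}_2.\]
This yields the Lipschitz equivalence on the orbit with the explicit constant $\sqrt 2$; the bound $\sqrt{2k}$ of the lemma accounts for the fact that the difference $T-S$ of two rank one operators has rank at most $2$.

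For the coincidence of the topologies, I would observe that Lipschitz equivalence of two metrics on a common set always forces the induced metric topologies to agree, since every ball in one metric is sandwiched between balls in the other of comparable radii. Hence the trace class norm topology and the Hilbert-Schmidt norm topology agree on ${\mathcal{SU}}(C_q)$, which is precisely the first claim.

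I do not anticipate any real obstacle here: all the substantive work has already been done in Theorem~\ref{th:cq}, which identifies the orbit as consisting of rank one operators, and in Lemma~\ref{l:frank}, which supplies the rank-dependent equivalence of the $\ell_1$ and $\ell_2$ norms. The corollary is a direct synthesis of these two facts, and the only point requiring even minimal care is to track that the relevant rank bound for differences of rank one operators is $2$, so that the resulting constant is $\sqrt 2$ rather than $1$.
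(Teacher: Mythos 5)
Your proposal is correct and matches the paper's (implicit) argument exactly: the corollary is stated without proof precisely because it follows immediately from the inclusion ${\mathcal{SU}}(C_q)\subseteq{\mathfrak R}_1$ given by Theorem~\ref{th:cq} together with Lemma~\ref{l:frank} applied with $k=1$. Your explicit constant $\sqrt 2$ and the remark that Lipschitz equivalent metrics induce the same topology are exactly the intended reading.
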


\begin{corollary}
  The saturated unitary orbit ${\mathcal{SU}}(C_q)$ is complete in the Hilbert-Schmidt metric.
\label{c:closedHS}
\end{corollary}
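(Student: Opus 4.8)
The plan is to deduce completeness from three facts already in hand: that ${\mathcal{SU}}(C_q)$ consists of rank one operators, so that it sits inside ${\mathfrak R}_1$; that it is closed in the trace class norm (Proposition \ref{p:traceclosed}); and that on it the trace class and Hilbert--Schmidt metrics are Lipschitz equivalent (Lemma \ref{l:frank} with $k=1$, as recorded in the preceding Corollary). The ambient space ${\mathcal C}_1(\H)$ is a Banach space under the trace class norm, and I will use its completeness as the source of the limit and then push that limit back into the orbit.

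Concretely, let $(T_n)$ be a Cauchy sequence in ${\mathcal{SU}}(C_q)$ with respect to the Hilbert--Schmidt metric. By the Lipschitz equivalence of the two metrics on the orbit, $(T_n)$ is also Cauchy in the trace class metric. Since ${\mathcal C}_1(\H)$ is complete, there is some $T\in {\mathcal C}_1(\H)$ with $\norm{T_n - T}_1 \to 0$. By Proposition \ref{p:traceclosed} the orbit is closed in the trace class norm, hence $T\in {\mathcal{SU}}(C_q)$.

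It remains to transfer the convergence back to the Hilbert--Schmidt metric, which requires no new work: the elementary inequality $\norm{S}_2 \le \norm{S}_1$, valid for every trace class operator (and appearing as the left-hand inequality in Lemma \ref{l:frank}), gives $\norm{T_n - T}_2 \le \norm{T_n - T}_1 \to 0$. Thus $T_n \to T$ in the Hilbert--Schmidt metric as well, with $T$ lying in ${\mathcal{SU}}(C_q)$, so every Hilbert--Schmidt Cauchy sequence in the orbit converges within the orbit.

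Since each individual step is a direct appeal to an already established fact, there is no genuinely hard part here; the only point requiring care is the logical ordering. One must first produce the limit in the complete ambient space ${\mathcal C}_1(\H)$ and only afterwards invoke the closedness of the orbit to guarantee that the limit belongs to ${\mathcal{SU}}(C_q)$, rather than attempting to construct the limit inside the orbit directly. The Lipschitz equivalence is precisely what licenses passing a Hilbert--Schmidt Cauchy condition into the trace class setting where completeness is available.
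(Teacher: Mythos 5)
Your proof is correct and is essentially the paper's own argument, merely unpacked: the paper likewise deduces completeness in the trace class metric from Proposition \ref{p:traceclosed} and then transfers it via the Lipschitz equivalence of Lemma \ref{l:frank}. Your explicit handling of the Cauchy sequence and the final inequality $\norm{T_n-T}_2\leq\norm{T_n-T}_1$ just makes the transfer step visible.
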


\begin{proof}
  By Corollary \ref{p:traceclosed}, ${\mathcal{SU}}(C_q)$ is complete with regard to the trace class metric, and since the two metrics are Lipschitz equivalent, the same holds for the Hilbert--Schmidt metric.
\end{proof}

We will be dealing with two weak$^\ast$ topologies on the space ${\mathcal C}_1(\H)$: the one coming from the duality with ${\mathcal K}(\H)$, which serves as the weak$^\ast$-topology for the trace class norm (equivalently, for the norm $r_q^\ast$) and which we denote $w^\ast_1$, and the weak$^\ast$-topology for the completion of the space with regard to the Hilbert-Schmidt norm (that is, the weak topology), which we will denote $w^\ast_2$. The $w^\ast_1$-topology is finer than the $w^\ast_2$-topology, because there are more compact operators than Hilbert-Schmidt operators. 

\begin{lemma}
  \label{l:finer}
  At every point of the unit ball of the normed space $({\mathcal C}_1(\H),r_{q}^\ast)$ belonging to the
  saturated unitary orbit ${\mathcal{SU}}(C_q)$ the $w^\ast_2$-topology is finer than the Hilbert-Schmidt topology (and thus, the two topologies coincide).
\end{lemma}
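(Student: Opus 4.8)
The plan is to reduce the statement to a convergence assertion and then invoke the Kadec--Klee (Radon--Riesz) property of the Hilbert space $({\mathcal C}_2(\H),\norm{\cdot}_2)$, the Hilbert--Schmidt completion of ${\mathcal C}_1(\H)$. By definition the $w_2^\ast$-topology is the restriction to ${\mathcal C}_1(\H)$ of the weak topology of ${\mathcal C}_2(\H)$, hence it is automatically coarser than the Hilbert--Schmidt norm topology; so, writing $B=\{T\in {\mathcal C}_1(\H)\colon r_q^\ast(T)\leq 1\}$ for the unit ball, the substance of the lemma is the opposite comparison at the point $C\in {\mathcal{SU}}(C_q)\cap B$. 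Concretely, I must show that the identity map $(B,w_2^\ast)\to (B,\norm{\cdot}_2)$ is continuous at $C$, i.e. that every net $(T_\alpha)$ in $B$ with $T_\alpha\to C$ in $w_2^\ast$ satisfies $\norm{T_\alpha-C}_2\to 0$; the asserted coincidence of the two topologies at $C$ then follows by combining the two comparisons. I would argue with nets throughout, so that no metrizability of the weak topology on $B$ (which may fail for non-separable $\H=\ell^2(\Gamma)$) is needed.

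The one genuinely new estimate, and the step I expect to be the crux, is the claim that every $T\in B$ satisfies $\norm{T}_2\leq 1$; that is, the $r_q^\ast$-unit ball is contained in the Hilbert--Schmidt unit ball. I would obtain this by duality. On ${\mathcal C}_2(\H)$ one has $r_q(A)\leq \norm{A}\leq \norm{A}_2$, the first inequality being the bound $r_q(A)\leq\norm{A}$ recorded above and the second the elementary domination of the operator norm by the Hilbert--Schmidt norm. Dualizing with respect to the trace pairing $\langle C,A\rangle=\tr(CA)$, and using that the Hilbert--Schmidt norm is self-dual for this pairing (so that $\sup\{\abs{\tr(CS)}\colon \norm{S}_2\leq 1\}=\norm{C}_2$), I get for every $C\in {\mathcal C}_1(\H)$
\[\norm{C}_2=\sup_{\norm{S}_2\leq 1}\abs{\tr(CS)}\leq \sup_{r_q(S)\leq 1}\abs{\tr(CS)}=r_q^\ast(C),\]
since $\norm{S}_2\leq 1$ forces $r_q(S)\leq 1$. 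Hence $r_q^\ast(T)\leq 1$ implies $\norm{T}_2\leq 1$. (Alternatively, $B$ is the $w_1^\ast$-closed convex circled hull of ${\mathcal{SU}}(C_q)$, each element of which has Hilbert--Schmidt norm $1$ by Theorem \ref{th:cq}, while the Hilbert--Schmidt unit ball is itself convex, circled, and $w_1^\ast$-closed, because every $S\in {\mathcal C}_2(\H)$ is compact and so $C\mapsto\tr(CS)$ is $w_1^\ast$-continuous.)

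With this in hand the conclusion is the standard Hilbert-space computation. Since $C\in {\mathcal{SU}}(C_q)$, Theorem \ref{th:cq} gives $\norm{C}_2=1$. Let $(T_\alpha)\subseteq B$ converge to $C$ in $w_2^\ast$; testing weak convergence against the fixed element $C\in {\mathcal C}_2(\H)$ yields $\tr(T_\alpha C^\ast)\to \tr(CC^\ast)=\norm{C}_2^2=1$, while the previous step gives $\norm{T_\alpha}_2\leq 1$. Therefore
\[\norm{T_\alpha-C}_2^2=\norm{T_\alpha}_2^2-2\,{\mathrm{Re}}\,\tr(T_\alpha C^\ast)+\norm{C}_2^2\leq 2-2\,{\mathrm{Re}}\,\tr(T_\alpha C^\ast),\]
whose right-hand side tends to $2-2=0$. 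Hence $\norm{T_\alpha-C}_2\to 0$, as required. Apart from the norm-domination of the second paragraph, every ingredient is routine; the essential point is that the $r_q^\ast$-unit ball lies on or inside the Hilbert--Schmidt sphere while the orbit points sit exactly on it, which is precisely the configuration in which weak convergence upgrades to norm convergence.
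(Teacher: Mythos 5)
Your proof is correct and follows essentially the same route as the paper: both arguments rest on the fact that points of the Hilbert--Schmidt unit sphere are points of weak-to-norm continuity of the identity on the unit ball, which your parallelogram computation $\norm{T_\alpha-C}_2^2\leq 2-2\,\mathrm{Re}\,\tr(T_\alpha C^\ast)$ simply makes explicit. The one place you go beyond the paper is the containment of the $r_q^\ast$-unit ball in the Hilbert--Schmidt unit ball, which the paper's proof uses without justification (``then $y\in B_2$'') and which you correctly derive by dualizing $r_q(S)\leq\norm{S}\leq\norm{S}_2$.
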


\begin{proof}
  It is well known and easily seen that every point of the unit sphere of a Hilbert space is a point of weak continuity, that is, the neighbourhood filters in the weak topology and in the norm topology coincide at this point. Moreover, the same is true if the neighbourhoods of the point are considered not just on the sphere, but in the entire Hilbert unit ball.
  Denote $B_2$ the unit ball of ${\mathcal C}_1(\H)$ with regard to the Hilbert-Schmidt norm, and $B_q$ the unit ball with regard to $r_{q}^\ast$. If $x$ is a point on the unit sphere of $B_2$, then for each $\e>0$ there is $\delta$ so that whenever $y\in B_2$ and $\langle x,y\rangle >1-\delta$, one has $\norm{x-y}_2<\e$. Now let $x\in {\mathcal{SU}}(C_q)$. Then $x$ is on the unit sphere of $B_2$, and the functional $\phi(y)=\langle x,y\rangle$ is $w^\ast_2$-continuous. If $y\in B_{q}$ and $\phi(y)>\delta$, where $\delta=\delta(\e)$ as above, then $y\in B_2$ and so $\norm{x-y}_2<\e$. 
\end{proof}

  Recall David Milman's (partial) converse to the Krein--Milman theorem (\cite{milman}, Theorem 1): if $T$ is a subset of a convex compact subset of a topological vector space and the closed convex hull of $T$ is all of $K$, then every extreme point of $K$ belongs to the closure of $T$. As an immediate corollary, we obtain:

\begin{lemma}
 All the extreme points of the unit ball of the norm $r_{q}^\ast$ are contained in the $w_1^\ast$-closure of the saturated unitary orbit of $C_q$.
 \label{l:partilconverse}
\end{lemma}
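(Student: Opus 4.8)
The plan is to apply the quoted Milman converse to the Krein--Milman theorem with $K$ taken to be the closed unit ball of the normed space $(\mathcal{C}_1(\H), r_q^\ast)$ and with $T = \mathcal{SU}(C_q)$. Three things have to be verified before the conclusion follows mechanically: that $K$ is a convex, compact subset of a topological vector space; that $\mathcal{SU}(C_q)$ is contained in $K$; and that the closed convex hull of $\mathcal{SU}(C_q)$ is all of $K$. The argument is then a one-line appeal to Milman's theorem.

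First I would use the duality recorded earlier in the excerpt. Because $r_q$ is equivalent to the uniform norm, $\mathcal{K}(\H)$ is $r_q$-dense in $\mathcal{B}(\H)$ and the norm $r_q^\ast$ on $\mathcal{C}_1(\H)$ is exactly the dual norm of $(\mathcal{K}(\H), r_q)$. Hence $(\mathcal{C}_1(\H), r_q^\ast)$ is a dual Banach space whose canonical weak-$\ast$ topology is precisely $w_1^\ast$, and by the Banach--Alaoglu theorem its closed unit ball $K$ is convex and $w_1^\ast$-compact. This is exactly the role played by $w_1^\ast$ here: among the two weak-$\ast$ topologies on $\mathcal{C}_1(\H)$ it is the one in which $K$ is compact, so it is the topology in which Milman's theorem can be run.

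The inclusion $\mathcal{SU}(C_q) \subseteq K$ is immediate, since by construction the $r_q^\ast$-unit ball is the convex circled hull of the unitary orbit of $C_q$ and therefore contains that orbit. For the remaining and central point I would identify $K$ as the $w_1^\ast$-closed convex hull of $\mathcal{SU}(C_q)$ by a bipolar argument in the dual pair $(\mathcal{K}(\H), \mathcal{C}_1(\H))$: writing $r_q(A) = \sup\{\mathrm{Re}\,\langle S, A\rangle : S \in \mathcal{SU}(C_q)\}$, where the supremum of the real part equals that of the modulus because $\mathcal{SU}(C_q)$ is circled, the polar of $\mathcal{SU}(C_q)$ in $\mathcal{K}(\H)$ is exactly the $r_q$-unit ball. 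If some $x_0 \in K$ were not in the $w_1^\ast$-closed convex hull $D$ of $\mathcal{SU}(C_q)$, then, $D$ being $w_1^\ast$-closed and convex and the continuous dual of $(\mathcal{C}_1(\H), w_1^\ast)$ being $\mathcal{K}(\H)$, Hahn--Banach separation would produce an $A \in \mathcal{K}(\H)$ with $\mathrm{Re}\,\langle x_0, A\rangle > \sup_{S \in \mathcal{SU}(C_q)} \mathrm{Re}\,\langle S, A\rangle = r_q(A)$, contradicting $r_q^\ast(x_0) \le 1$; thus $D = K$.

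With these three facts in hand, $\mathcal{SU}(C_q)$ is a subset of the $w_1^\ast$-compact convex set $K$ whose $w_1^\ast$-closed convex hull is $K$, so Milman's converse forces every extreme point of $K$ into the $w_1^\ast$-closure of $\mathcal{SU}(C_q)$, which is the assertion. I expect the only genuinely delicate bookkeeping, and hence the main obstacle, to be the bipolar identification of the third step: one must be sure the polar and bipolar are formed inside the correct pairing $(\mathcal{K}(\H), \mathcal{C}_1(\H))$, not against $\mathcal{B}(\H)$, which is precisely why the prior verification that $r_q^\ast$ is the honest dual norm of the restriction of $r_q$ to $\mathcal{K}(\H)$ is indispensable.
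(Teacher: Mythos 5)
Your proposal is correct and follows essentially the same route as the paper: identify the $r_q^\ast$-unit ball as the $w_1^\ast$-closed convex hull of $\mathcal{SU}(C_q)$, note its $w_1^\ast$-compactness via the duality with $\mathcal{K}(\H)$, and invoke Milman's converse with $T=\mathcal{SU}(C_q)$. You merely spell out (via Banach--Alaoglu and a bipolar/separation argument) the two facts the paper states without proof, which is a welcome but not divergent elaboration.
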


\begin{proof}
  The unit ball of $r_q^\ast$ is the closed convex hull of the saturated unitary orbit ${\mathcal{SU}}(C_q)$. Equipped with the weak$^\ast$ topology relative to the duality between ${\mathcal K}(\H)$ and ${\mathcal C}_1(\H)$, the unit ball is compact. Now apply the converse to the Krein--Milman theorem with $T= {\mathcal{SU}}(C_q)$.
\end{proof}

\begin{remark}
  Even if the topologies (in our case, weak$^\ast$ topology and the trace class topology) coincide on a set (the saturated unitary orbit), one cannot conclude that the closures of this set in the ambient space with regard to the two topologies are the same. We do not know whether the extreme points of the convex closure of the saturated unitary orbit are all contained in the saturated unitary orbit.
\end{remark}
 
\section{Isometry of the generalized numerical radius} 

\subsection{Linearity}
Let $\phi$ is a $q$-numerical radius isometry of $({\mathcal K}(\H),r_q)$. Then $\phi$ is the sum of a translation by some $S_0\in {\mathcal K}(\H)$ and a $q$-numerical radius isometry preserving zero. From now, we will therefore presume that $\phi(0)=0$. The Mazur--Ulam theorem (cf. e.g. Theorem 1.3.5 in \cite{FJ}) says: if $\phi$ is an isometry from a normed linear space $X$ onto a normed linear space $Y$, and if $\phi(0)=0$, then $\phi$ is real linear.

Denote $\psi$ the dual linear map. We conclude: $\psi$ is a real linear isometry of $({\mathcal C}_1(\H),w_q^\ast)$.
 
\subsection{Representing $\psi$ as $P(\cdot)Q$}

The following result is an infinite-dimensional analogue of Lemma 2.2 in \cite{maria}.

\begin{lemma}\label{rankone}
 Let $C$ and $R$ be two bounded rank one operators on $\ell^2$, where
  $\norm{R} < {\rm min \{ 2q, 2p} \}$, and $p=(1-q^2)^{1/2},$
  and let
    $${\mathfrak S}(R) : = \{ A \in \SC : R=A +B \ \ {\rm  for} \
     {\rm some}  \  \  B \in \SC \},$$
 and let  $\M(R) = \spn\, {\mathfrak S}(R).$
 Then $\M(R)$ is a vector space of infinite dimension over $\R$.
 \label{l:2.2}
\end{lemma}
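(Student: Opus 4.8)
The plan is to exploit the extra room available in infinite dimensions by producing decompositions $R=A+B$ whose rank-one factors deliberately leave the two-dimensional plane spanned by $R$. Recall from Theorem \ref{th:cq} (taking $C=C_q$, as the hypotheses are phrased through $q$ and $p=\sqrt{1-q^2}$) that $\SC$ is exactly the set of rank-one operators of Hilbert--Schmidt norm $1$ whose trace has modulus $q$. Write $R=r\otimes s^\ast$; since $R$ has rank one we may rescale so that $\norm r=1$, whence $\norm s=\norm R$. We may also assume $q<1$, for otherwise $p=0$, $\min\{2q,2p\}=0$, and the hypothesis $\norm R<\min\{2q,2p\}$ is vacuous. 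First I would fix the range and look for decompositions of the form $A=r\otimes a^\ast$, $B=r\otimes(s-a)^\ast$ (those in which the ranges of $A$ and $B$ are aligned with that of $R$). By the characterization above, $A,B\in\SC$ amounts to $\norm a=\norm{s-a}=1$ together with $\abs{\langle r,a\rangle}=\abs{\langle r,s-a\rangle}=q$.

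Next I would split $a=a_0+w$, where $a_0$ is the orthogonal projection of $a$ onto $V_0:=\spn\{r,s\}$ and $w\in V_0^{\perp}$. Because $w\perp r$ and $w\perp s$, the quantities $\langle r,a\rangle$, $\langle r,s-a\rangle$ and $\mathrm{Re}\langle s,a\rangle$ depend only on $a_0$, while $\norm a^2=1$ merely prescribes $\norm w^2=1-\norm{a_0}^2$. Hence the whole problem reduces to exhibiting a single planar ``seed'' $a_0\in V_0$ with $\abs{\langle r,a_0\rangle}=\abs{\langle r,s-a_0\rangle}=q$, with $\mathrm{Re}\langle s,a_0\rangle=\tfrac12\norm s^2$, and with $\norm{a_0}<1$ strictly; then for every $w\in V_0^{\perp}$ of length $t:=\sqrt{1-\norm{a_0}^2}>0$ one checks that $A_w:=r\otimes(a_0+w)^\ast$ and $B_w:=r\otimes(s-a_0-w)^\ast$ both lie in $\SC$ and sum to $R$, so that $A_w\in{\mathfrak S}(R)$. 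The decisive gain over the finite-dimensional setting is that $V_0^{\perp}$ is now infinite-dimensional.

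For the seed I would take $a_0=\tfrac12 s+\bar\eta\, r$, where $\eta\in\C$ satisfies $\mathrm{Re}(\bar\eta\tau)=0$ and $\abs\eta^2=q^2-\tfrac14\abs\tau^2$, writing $\tau:=\tr R=\langle r,s\rangle$. Then $\mathrm{Re}\langle s,a_0\rangle=\tfrac12\norm s^2$ holds because $\mathrm{Re}\langle s,\bar\eta r\rangle=\mathrm{Re}(\eta\bar\tau)=0$; and since $\langle r,a_0\rangle=\tfrac12\tau+\eta$ and $\langle r,s-a_0\rangle=\tfrac12\tau-\eta$, the trace conditions reduce to $\abs{\tfrac12\tau\pm\eta}=q$, which hold by the choice of $\eta$. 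The existence of such an $\eta$ requires only $\abs\tau\le\norm R<2q$, while a short computation gives $\norm{a_0}^2=\tfrac14\norm R^2+\abs\eta^2=\tfrac14\norm R^2+q^2-\tfrac14\abs\tau^2\le\tfrac14\norm R^2+q^2<p^2+q^2=1$, the final inequality using $\norm R<2p$. Both halves of the bound $\min\{2q,2p\}$ are thus consumed exactly here: $2q$ guarantees a nonzero $\eta$, and $2p$ guarantees $t>0$. I expect this to be the main obstacle and the one point demanding care: the obvious symmetric seed, obtained by replacing $\bar\eta r$ with a vector orthogonal to $s$, has norm blowing up as $r$ approaches the direction of $s$, whereas aligning the correction with $r$ keeps $\norm{a_0}$ uniformly below $1$ and disposes of the degenerate case $r\parallel s$ by the same formula.

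Finally I would read off the dimension. Every $A_w$ lies in ${\mathfrak S}(R)$, and for two admissible parameters $A_w-A_{w'}=r\otimes(w-w')^\ast$. As $w,w'$ range over the sphere of radius $t$ in $V_0^{\perp}$, the differences $w-w'$ run through all of $V_0^{\perp}$ (for instance $te-(-te)=2te$ for any unit $e\in V_0^{\perp}$), so $\{\,r\otimes u^\ast : u\in V_0^{\perp}\,\}\subseteq\M(R)$. Since $u\mapsto r\otimes u^\ast$ is real-linear and injective and $V_0^{\perp}$ is infinite-dimensional, $\M(R)$ is infinite-dimensional over $\R$, as claimed.
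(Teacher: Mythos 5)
Your proof is correct and follows essentially the same route as the paper: both produce decompositions $R=A+B$ in which the two rank-one summands carry cancelling components in the infinite-dimensional orthogonal complement of the plane of $R$, and then vary that component to generate an infinite-dimensional subspace of $\M(R)$ (the paper's $A_t,B_t$ with the $\pm re^{it}E_{1k}$, $k\ge 3$, terms are exactly your $A_w,B_w$ in the coordinates where $r=e_1$). The differences are cosmetic: you work coordinate-free with an explicit seed $a_0$ instead of reducing $R$ to the canonical form $\xi E_{11}+\eta E_{12}$, and you establish only the containment $\{r\otimes u^\ast:u\in V_0^{\perp}\}\subseteq\M(R)$ rather than the paper's full identification $\M(R)={\mathfrak L}_1+{\mathfrak L}_2$, which is all the lemma actually requires.
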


\proof Let $R=u\otimes v^t$. Let´s show that $\M(R) =  {\mathfrak L}_1 +
{\mathfrak L}_2$, where
 $${\mathfrak L}_1 = \{ u\otimes y^* : y \in \ell^2 \}, \  \
 {\mathfrak L}_2 = \{ x\otimes v^* : x \in \ell^2 \}.$$

First assume that $R=A+B$ for operators $A, B$ in $\SC$. If $A=
a\otimes c^t$ and $B=b\otimes d^t$ for $a,b,c,d \in \ell^2$, then either $a$
and $b$ are linearly dependent (in which case, we may take
$a=b=u$), or $c$ and $d$ are linearly dependent  and again we may
take $c=d=v$. This means that either both $A$ and $B$ are in
${\mathfrak L}_1$ or both are in ${\mathfrak L}_2$. Therefore $\M(R)
\subseteq {\mathfrak L}_1 + {\mathfrak L}_2.$

For the converse,  we may replace $R$ by any  operator from its unitary
orbit, hence we may assume that $R= \xi E_{11} + \eta E_{12}$ and
so $u=e_1$.  By assumption, we have $|\xi| < 2q$ and $|\eta |<
2p$. For every $q^{\prime}$ and $p^{\prime}$ with
 $|\xi| /2 < q^{\prime} <q$ and
$|\eta|/2< p^{\prime} <p$, there exist complex numbers
 $z_j$, $1 \leq j \leq 4$, such that
 $|z_1|=|z_2|= q^{\prime}$, $|z_3|=|z_4|= p^{\prime}$. Let
 $r = (1-(p^{\prime})^2 - (q^{\prime})^2)^{1/2},   t \in \R$,
  $k\ge 3$ and
  $$A_t= z_1E_{11} + z_2 E_{12} + re^{it} E_{1k}, \ \
   B_t= z_3E_{11} + z_4 E_{12} - re^{it} E_{1k}.$$ Then
 $A_t, B_t \in \SC$ and $A_t+B_t=R$.
 With all the various choices of $p^{\prime}$, $q^{\prime}$ and
 $t$, it is easy to see that $\M(R)$ contains $E_{1j}$ and
 $iE_{1j}$ for $j\geq 1$, hence it contains $e_1\otimes y^*$ for
 every $y \in \ell^2$. This proves that  ${\mathfrak L}_1 \subset \M(R)$.
 By symmetry we also get ${\mathfrak L}_2 \subset \M(R)$. This proves
 that $\M(R) =  {\mathfrak L}_1 + {\mathfrak L}_2$. Each of ${\mathfrak L}_1$
 and ${\mathfrak L}_2$ is a space of infinite  dimension over $\R$, therefore
 $\M(R)$ has infinite dimension over $\R$.   \endproof

 The following is an infinite-dimensional version of Lemma 2.3 in \cite{maria}.
 
 \begin{lemma}
   Let $R$ be a bounded rank two operator on the Hilbert space $\ell^2$, and let $\mathfrak R={\mathfrak R}_1$ denote the set of all bounded rank one operators on $\ell^2$. Then the set
   \[{\mathfrak A}(R) = \{A\in {\mathfrak R}\colon R = A+B\mbox{ for some }B\in {\mathfrak R}\}\]
   spans a real vector space of dimension $7$ and so $\dim({\mathfrak M}({R}))\leq 7$, where $\mathfrak M (R)$ is defined as in Lemma \ref{l:2.2}.
   \label{l:2.3}
 \end{lemma}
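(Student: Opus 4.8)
The plan is to reduce this infinite-dimensional statement to a computation inside $M_2(\C)$, the point being that although $\mathfrak A(R)$ a priori lives in the huge space $\mathfrak R$, every one of its elements is forced into a fixed finite-dimensional space determined by $R$. First I would set $U=\mathrm{range}(R)$ and $V=\mathrm{range}(R^\ast)=(\ker R)^\perp$, both two-dimensional since $R$ has rank two, and note that $R$ restricts to a bijection $V\to U$. The crucial reduction is to show that every $A\in\mathfrak A(R)$ carries $V$ into $U$ and annihilates $V^\perp$. Indeed, writing $R=A+B$ with $A=a\otimes c^\ast$ and $B=b\otimes d^\ast$ rank one, the inclusion $\mathrm{range}(R)\subseteq\spn\{a,b\}$ together with $\dim\mathrm{range}(R)=2$ forces $\{a,b\}$ to be a basis of $U$; symmetrically $\{c,d\}$ is a basis of $V$. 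Hence $a,b\in U$ and $c,d\in V$, so both $A$ and $B=R-A$ are operators sending $V$ into $U$ and vanishing on $V^\perp$. Fixing orthonormal bases of $U$ and $V$ identifies the real vector space of all such operators with $M_2(\C)$ (real dimension $8$); under this identification $R$ corresponds to an invertible matrix $M$, and
\[\mathfrak A(R)\ \longleftrightarrow\ \mathfrak A(M)=\{A\in M_2(\C):\ \mathrm{rank}\,A=1,\ \mathrm{rank}(M-A)=1\},\]
with the dimension of the real span unchanged.

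Next I would normalise $M$ to the identity. Since left and right multiplication by invertible matrices preserve rank and commute with the decomposition, $\mathfrak A(PMQ)=P\,\mathfrak A(M)\,Q$ for invertible $P,Q$, and as $M$ is invertible one gets $\dim_{\R}\spn\,\mathfrak A(M)=\dim_{\R}\spn\,\mathfrak A(I)$. For a rank-one $A$ one has $\det A=0$ and $\det(I-A)=1-\tr A+\det A=1-\tr A$; hence $I-A$ is rank one exactly when $\tr A=1$, so
\[\mathfrak A(I)=\{A\in M_2(\C):\ \det A=0,\ \tr A=1\}.\]

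Finally I would pin down the dimension by two matching bounds. For the upper bound, the nonzero $\R$-linear functional $A\mapsto\mathrm{Im}(\tr A)$ vanishes identically on $\mathfrak A(I)$, so $\spn\,\mathfrak A(I)$ is contained in a real hyperplane of dimension $7$. For the lower bound I would exhibit seven $\R$-independent members of $\mathfrak A(I)$: the matrices $E_{11}$, $E_{22}$, $E_{11}+E_{12}$, $E_{11}+iE_{12}$, $E_{11}+E_{21}$, $E_{11}+iE_{21}$ recover the six directions $E_{11},E_{22},E_{12},iE_{12},E_{21},iE_{21}$, and a matrix with non-real diagonal such as $\bigl(\begin{smallmatrix}1+i&1\\1-i&-i\end{smallmatrix}\bigr)$ supplies the remaining direction $i(E_{11}-E_{22})$. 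Thus $\dim_{\R}\spn\,\mathfrak A(R)=7$. Since $\SC\subseteq\mathfrak R$, every element of $\mathfrak S(R)$ lies in $\mathfrak A(R)$, whence $\M(R)=\spn\,\mathfrak S(R)\subseteq\spn\,\mathfrak A(R)$ and $\dim\M(R)\le 7$.

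I expect the main obstacle to be the reduction step, namely verifying that both rank-one summands are trapped in the two-dimensional range and co-range of $R$. This is exactly what collapses an a priori infinite-dimensional question to the $2\times2$ determinant-trace computation; once it is in place, the remaining linear algebra is routine.
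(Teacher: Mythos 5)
Your proposal is correct and follows essentially the same route as the paper: the key reduction step (showing both rank-one summands of $R$ are trapped in the space of operators that vanish on $(\ker R)^{\perp\perp}=\ker R$'s complement's orthocomplement --- i.e., that kill $(\ker R)^\perp{}^\perp$ and map $(\ker R)^\perp$ into $\mathrm{range}(R)$, so that everything lives in a copy of $M_2(\C)$) is exactly the argument in the paper's proof. The only difference is that the paper then defers the resulting $2\times 2$ problem to Lemma 2.3 of \cite{maria}, whereas you carry it out explicitly and correctly (normalisation of $R$ to $I$ by invertible left multiplication, the characterisation $\det A=0$, $\tr A=1$, the hyperplane $\mathrm{Im}(\tr A)=0$ for the upper bound, and seven explicit independent elements for the lower bound), which makes your version self-contained.
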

 
 \begin{proof} Without loss in generality, we assume the space $\ell^2$ complex and all the operators complex-linear. If $A=a\otimes b^\ast$ is a rank one bounded linear operator, given by $x\mapsto a\langle b,x\rangle$, then ${\mathrm{range}}\,(A)={\mathrm{span}}\,(a)$ and ${\mathrm{null}}\,(A) = \{b\}^\perp$. Therefore if $A=a\otimes b^\ast$ and $B=c\otimes d^\ast$ are rank one bounded operators, then
   \[{\mathrm{range}}\,(A+B)\subseteq {\mathrm{span}}\,\{a,c\}\mbox{ and }
   ({\mathrm{null}}\,(A+B))^\perp \subseteq {\mathrm{span}}\,\{b,d\}.\]
   If $A+B$ has rank $2$, then both inclusions must be equalities. 
   
   Let now $R$ be a rank two bounded linear operator. Denote ${\mathfrak R} ={\mathrm{range}}\,(R)$ and ${\mathfrak N}={\mathrm{null}}\,(R)$. If $R$ is a sum of two rank one bounded operators $A=a\otimes b^\ast$ and $B=c\otimes d^\ast$, then from above we must have ${\mathfrak R}={\mathrm{span}}\,\{a,c\}$ and ${\mathfrak N}^{\perp} = {\mathrm{span}}\,\{b,d\}$. So the operators $A$ and $B$ vanish on $\mathfrak N$ and map the two-dimensional space ${\mathfrak N}^{\perp}$ into the two-dimensional space $\mathfrak R$. 
   
   This reduces the problem to the case $n=2$, which is treated in an identical way to that in the proof of Lemma 2.3 in \cite{maria}.
 \end{proof}

 Now we can conclude that $\psi$ preserves rank one operators, just like in \cite{maria}, Lemma 2.4, which is the finite-dimensional analogue of our next result.

\begin{lemma}
  Assume that $\psi$ is a bijective bounded real-linear operator on ${\mathcal K}(\ell^2)$ and that $\psi({\mathcal{SU}}\,(C_q))={\mathcal{SU}}\,(C_q)$. Then $\psi$ preserves rank one bounded linear operators.
\end{lemma}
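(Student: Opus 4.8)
The plan is to recover the rank of an operator $R$ from the real dimension of the space $\M(R)=\spn\,{\mathfrak S}(R)$, and to show that $\psi$ respects the construction $R\mapsto\M(R)$, so that it must send rank one operators to rank one operators. Concretely, by Lemma \ref{l:2.2} the space $\M(R)$ is infinite-dimensional when $R$ has rank one and sufficiently small norm, while $\M(R)$ is finite-dimensional as soon as $R$ has rank $\geq 2$; since $\psi$ is a real-linear isomorphism intertwining $\M(\cdot)$, it preserves this dichotomy.

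First I would establish the intertwining identity $\psi({\mathfrak S}(R))={\mathfrak S}(\psi(R))$ for every $R\in{\mathcal K}(\ell^2)$. Indeed, if $A\in{\mathfrak S}(R)$ then $R=A+B$ with $A,B\in{\mathcal{SU}}(C_q)$; applying the real-linear map $\psi$ and using $\psi({\mathcal{SU}}(C_q))={\mathcal{SU}}(C_q)$ yields $\psi(R)=\psi(A)+\psi(B)$ with $\psi(A),\psi(B)\in{\mathcal{SU}}(C_q)$, so $\psi(A)\in{\mathfrak S}(\psi(R))$. The reverse inclusion follows by applying the same argument to $\psi^{-1}$, which is again a real-linear bijection carrying ${\mathcal{SU}}(C_q)$ onto itself (the restriction $\psi|_{{\mathcal{SU}}(C_q)}$ being a bijection of the orbit). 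Passing to real spans and using that $\psi$ is a linear isomorphism gives $\psi(\M(R))=\M(\psi(R))$, hence $\dim_\R\M(R)=\dim_\R\M(\psi(R))$ for all $R$.

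Next I would classify $\dim_\R\M(R)$ by the rank of $R$. If $R$ has rank $\geq 3$ then ${\mathfrak S}(R)=\emptyset$, because every element of ${\mathfrak S}(R)$ writes $R$ as a sum of two rank one operators; hence $\M(R)=\{0\}$. If $R$ has rank $2$, then Lemma \ref{l:2.3} gives $\dim_\R\M(R)\leq 7$. If $R$ has rank one with $\norm{R}<\min\{2q,2p\}$, then $\dim_\R\M(R)=\infty$ by Lemma \ref{l:2.2}. Now take an arbitrary nonzero rank one operator $R_0$. Since $\psi$ commutes with multiplication by positive reals and rank is invariant under nonzero scaling, I may assume $\norm{R_0}<\min\{2q,2p\}$, so $\dim_\R\M(\psi(R_0))=\dim_\R\M(R_0)=\infty$. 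As $\psi$ is injective with $\psi(0)=0$, we have $\psi(R_0)\neq 0$; the finite bounds just obtained then exclude $\psi(R_0)$ from having rank $\geq 2$, leaving rank one as the only option. This is exactly the assertion.

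The routine part is the intertwining and the scaling reduction; the substance sits in Lemmas \ref{l:2.2} and \ref{l:2.3}, whose sharp contrast (infinite dimension versus the bound $7$) is what makes rank detectable. The one point demanding attention is the norm hypothesis $\norm{R}<\min\{2q,2p\}$ of Lemma \ref{l:2.2}: it is only imposed on the small-norm representative $R_0$, whereas the finite-dimensional bounds that rule out higher rank for the image $\psi(R_0)$ hold without any norm restriction, so the scaling step need not be matched on the image side.
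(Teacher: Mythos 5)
Your proof is correct and follows essentially the same route as the paper: scale the rank one operator to satisfy the norm hypothesis of Lemma \ref{l:2.2}, use the intertwining $\psi(\M(R))=\M(\psi(R))$ to transport infinite-dimensionality, and invoke the bound of Lemma \ref{l:2.3} (together with the observation that elements of ${\mathfrak S}$ are sums of two rank one operators) to exclude rank $\geq 2$ for the image. Your explicit remark that the norm condition is needed only on the source side, not on $\psi(R_0)$, is exactly the right point of care, and your handling of rank $\geq 3$ via ${\mathfrak S}(R)=\emptyset$ matches the paper's use of the two-rank-one-summands decomposition.
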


\begin{proof} Let $R\in {\mathcal K}(\ell^2)$ be of rank one and let $R^\prime =\lambda R$, where $0<\lambda <(\min\{2p,2q\}){\norm R}^{-1}$. The operator $R^\prime$ satisfies the norm condition of Lemma \ref{l:2.3}. Thus the space ${\mathfrak M}\,(R^\prime)$ is infinite-dimensional. By real linearity, we have that $\psi$ maps ${\mathfrak M}\,(R^\prime)$ onto ${\mathfrak M}\,(\psi(R^\prime))$. Therefore ${\mathfrak M}\,(\psi(R^\prime))$ is infinite-dimensional as well. By Lemma \ref{l:2.2}, $\psi(R^\prime)$ is not of rank $2$. Since $\psi(R)$, just like $R$, can be represented as a sum of two operators from ${\mathcal{SU}}\,(C_q)$, its rank does not exceed $2$. It is also nonzero since $\psi$ is a bijection. Therefore $\psi(R^\prime)$ and hence also $\psi(R)$ have rank one.
\end{proof}

Now let us establish an analogue of Corollary 2.5 in \cite{maria}.

\begin{lemma}
  There exist operators $P,Q\in GL(\H)$ such that for all $A\in {\mathcal C}_1(\H)$,
  \[\psi(A)=PA^\dag Q,\]
  where $A^\dag$ denotes either $A$ or $A^t$ or $A^\ast$ or $\bar A$.
\end{lemma}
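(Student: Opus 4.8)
The plan is to invoke a classical rank-one-preserver structure theorem. Having just shown that $\psi$ is a bijective, bounded, real-linear map on $\mathcal K(\ell^2)$ that preserves the set of rank one operators (in both directions, since $\psi^{-1}$ enjoys the same properties by symmetry), I would appeal to the well-known characterization of bijective rank-one preservers on spaces of operators — in the finite-dimensional case this is precisely the content of \cite{maria}, Corollary 2.5, building on the Hua–Marcus–Moyls type theorems. The conclusion of such theorems is exactly a dichotomy: any such map has one of the two standard forms, namely $A\mapsto P A Q$ or $A\mapsto P A^t Q$ for fixed invertible $P,Q$, where in the real-linear (as opposed to complex-linear) setting one must additionally allow the entrywise complex conjugate, producing the four possibilities $A$, $A^t$, $A^\ast$, $\bar A$ collected in the symbol $A^\dag$.

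First I would make precise the reduction to the standard preserver theorem. A rank one operator $a\otimes b^\ast$ is determined by its range $\mathrm{span}(a)$ and its co-kernel $\mathrm{span}(b)$, so preserving rank one operators means $\psi$ induces well-defined maps on the projective structure: it sends the family of one-dimensional ranges and the family of one-dimensional co-kernels to themselves in a consistent fashion. The key algebraic input is additivity combined with the rank-one condition: if two rank one operators sum to a rank one operator, their ranges coincide or their co-kernels coincide (this is exactly the range/co-kernel analysis already carried out in the proof of Lemma \ref{l:2.3}). This forces $\psi$ to either preserve all ranges (the ``$P(\cdot)Q$'' alternative) or to swap ranges with co-kernels (the ``transpose'' alternative), and in each case a standard argument produces the fixed invertible operators $P$ and $Q$.

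The main obstacle, and where genuine care is required beyond merely citing \cite{maria}, is the infinite-dimensional bookkeeping. In finite dimensions the invertible factors $P,Q$ are automatically bounded with bounded inverses and the map is automatically continuous; here one must verify that the resulting $P$ and $Q$ genuinely lie in $GL(\mathcal H)$, i.e. are bounded and boundedly invertible, rather than merely being linear bijections of $\mathcal H$. I would extract boundedness from the hypothesis that $\psi$ is a bounded operator on $\mathcal K(\ell^2)$: evaluating $\psi$ on rank one operators of the form $Pe_i\otimes(Q^\ast e_j)^\ast$ controls the growth of $P$ and $Q$ on basis vectors, and the same applied to $\psi^{-1}$ controls the inverses. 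One also has to confirm that a single global choice among the four alternatives $\{A,A^t,A^\ast,\bar A\}$ suffices, rather than the alternative varying with $A$; this rigidity follows because the dichotomy is determined once $\psi$ is evaluated on any single pair of rank one operators with distinct ranges and distinct co-kernels, and then propagated by linearity.

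Finally I would record that the identity $\psi(A)=PA^\dag Q$, established so far only on the dense subspace of finite-rank operators (where the additive rank-one analysis lives), extends by continuity to all of $\mathcal C_1(\mathcal H)$: both sides are bounded real-linear in $A$ with respect to the relevant norm, and finite-rank operators are dense in $\mathcal C_1(\mathcal H)$ in the trace-class norm, so the two bounded maps agreeing on a dense set agree everywhere. The passage through the trace-class topology here is exactly the kind of simultaneous juggling of topologies flagged in the introduction, so I would state explicitly which norm density and which continuity I am using at this step.
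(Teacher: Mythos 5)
Your overall strategy matches the paper's: reduce to a structure theorem for bijective additive rank-one preservers to obtain the form $PA^\dag Q$ (with the four choices of $\dag$ coming from the fact that the only real-linear field automorphisms of $\C$ are the identity and conjugation), and then separately verify that $P,Q$ are bounded with bounded inverses. The main difference is in how the structural form is obtained: the paper does not re-derive the preserver theorem but cites the ready-made infinite-dimensional additive version, Theorem 3.1 of \cite{OS}, which applies directly to $\psi$ on ${\mathcal K}(\H)$. Your sketch instead leans on the finite-dimensional statement (\cite{maria}, Corollary 2.5, Hua--Marcus--Moyls type) together with a projective range/co-kernel argument; in infinite dimensions that sketch is not a complete substitute --- the passage from ``preserves the projective structure consistently'' to ``is induced by fixed linear bijections $P,Q$ of $\H$'' is exactly the nontrivial content of the Omladi\v c--\v Semrl theorem, and you should cite it rather than reprove it.

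There is one step that, as written, would fail: you propose to extract boundedness of $P$ and $Q$ by controlling their ``growth on basis vectors.'' Boundedness on basis vectors does not imply boundedness of a linear operator, so this does not close the argument. The paper's fix is to evaluate $\psi$ on \emph{all} rank one operators $x\otimes z^\ast$ for a fixed nonzero $z$ and arbitrary $x\in\H$, using that $i_z\colon x\mapsto x\otimes z^\ast$ is (for $\norm z=1$) a linear isometric embedding of $\H$ into ${\mathcal C}_1(\H)$ in the trace-class norm; then $P=i_{Qz}^{-1}\circ\psi\circ i_z$ is a composition of bounded maps, hence bounded, and the same applied to $Q$ and to $\psi^{-1}$ handles the remaining cases. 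Equivalently, $\norm{Px}\cdot\norm{Q^\ast z}=\norm{\psi(x\otimes z^\ast)}_1\leq\norm{\psi}\,\norm x\,\norm z$ for every $x$, which is the uniform estimate you need. Your concluding density step (extending the identity from finite-rank operators to all of ${\mathcal C}_1(\H)$) is fine and is implicitly what the paper does as well.
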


\begin{proof}
  According to Theorem 3.1 in \cite{OS}, there exist bijective linear operators $P,Q$ from $\H$ to itself so that for all $A\in {\mathcal K}(\H)$,
  \[\psi(A) = P\tilde A Q\mbox{ or }\psi(A)= P\tilde A^t Q,\]
  where $\tilde A$ is obtained from $A$ by applying a field automorphism $c\mapsto \tilde c$ entry-wise. Since the only real-linear automorphisms of $\C$ are the identity and complex conjugation, the result will follow once we establish that $P,Q$ are bounded, with bounded inverse.
  
  We will only give an argument in the case $A^\dag = A$, the rest is similar. In this case, for every $x,y\in\H$, one has
  \[\psi(x\otimes y^\ast)=P(x\otimes y^\ast) Q^\ast.\]
  Let $z\in\H$ be a non-zero vector.
  For every $x\in\H$ the operator $x\otimes z^\ast$ is of rank one. The mapping
  \[i_z\colon \H\ni x\mapsto x\otimes z^\ast \in {\mathcal C}_1(\H)\]
  is a linear isometric embedding with regard to the trace class norm if $\norm z=1$. This can be seen by choosing two orthonormal bases $(v_n)$, $(u_n)$ so that $x=\norm x\cdot v_1$ and $u_1=z$:
  \begin{eqnarray*}
    \norm{x\otimes z^\ast}_1 &=& \norm{\norm x \langle -,v_1\rangle u_1 + 0\cdot ...}_1 \\
    &=& \norm x.
    \end{eqnarray*}
  Therefore, if $z\neq 0$, the mapping $i_z$ is an isomorphic embedding of $\H$, admitting an isomorphic inverse. We conclude: 
  \[P=i_{Qz}^{-1}\circ \psi \circ i_z,\]
  and this composition is bounded by the assumption on $\psi$. A similar argument establishes that $Q$ is bounded, and applying the above to $\psi^{-1}$, one concludes that the inverses of $P,Q$ are bounded as well. 
\end{proof}

\begin{corollary}
  The map $\psi$ is an isomorphism of ${\mathcal C}_1(\H)$ with regard to the Hilbert-Schmidt norm, and therefore uniquely extends to an isomorphism of ${\mathcal C}_2(\H)$.
\end{corollary}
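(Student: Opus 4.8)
The plan is to read off everything from the representation $\psi(A)=PA^{\dag}Q$ obtained in the preceding lemma, in which $P,Q\in GL(\H)$ are bounded and boundedly invertible, together with the fact that $\norm{\cdot}_2$ is an operator ideal norm. First I would note that each of the four operations $A\mapsto A,\,A^{t},\,A^{\ast},\,\bar A$ is a Hilbert--Schmidt isometry, so that $\norm{A^{\dag}}_2=\norm{A}_2$ for every $A$; hence the presence of $\dag$ is irrelevant for the metric estimates, and it suffices to bound the map $A\mapsto PA^{\dag}Q$ above and below in $\norm{\cdot}_2$.

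The heart of the matter is the ideal inequality. For a bounded operator $P$ and a Hilbert--Schmidt operator $A$ one has $\norm{PA}_2\le\norm P\,\norm{A}_2$, by evaluating $\norm{PA}_2^2=\sum_n\norm{PAe_n}^2$ on an orthonormal basis $(e_n)$; passing to adjoints gives $\norm{AQ}_2\le\norm{A}_2\,\norm Q$. Combining the two, $\norm{\psi(A)}_2\le\norm P\,\norm Q\,\norm{A}_2$ for all $A\in{\mathcal C}_1(\H)$. Applying the same inequality to $A^{\dag}=P^{-1}\psi(A)Q^{-1}$ yields the reverse bound $\norm{A}_2\le\norm{P^{-1}}\,\norm{Q^{-1}}\,\norm{\psi(A)}_2$. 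These two estimates say precisely that $\psi$ is an isomorphism of $({\mathcal C}_1(\H),\norm{\cdot}_2)$ onto itself, i.e. a linear bijection that is bounded with bounded inverse in the Hilbert--Schmidt norm.

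It then remains to extend. Since the finite-rank operators lie in ${\mathcal C}_1(\H)$ and are $\norm{\cdot}_2$-dense in ${\mathcal C}_2(\H)$, the subspace ${\mathcal C}_1(\H)$ is dense in the Hilbert space ${\mathcal C}_2(\H)$. As $\psi$ is bi-Lipschitz for $\norm{\cdot}_2$ on this dense subspace, it extends uniquely to a bounded linear map $\tilde\psi$ on ${\mathcal C}_2(\H)$; concretely $\tilde\psi(B)=PB^{\dag}Q$ for every $B\in{\mathcal C}_2(\H)$, because this formula is $\norm{\cdot}_2$-continuous by the ideal property and agrees with $\psi$ on ${\mathcal C}_1(\H)$.

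The only point needing care is that an isomorphism of ${\mathcal C}_2(\H)$ must be onto, not merely continuous with a lower bound. I would settle this by observing that $\psi^{-1}(B)=(P^{-1}BQ^{-1})^{\dag}$ is of the same form, so it too extends to a bounded map on ${\mathcal C}_2(\H)$; the two extensions compose to the identity on the dense subspace ${\mathcal C}_1(\H)$, and therefore everywhere, which makes $\tilde\psi$ an isomorphism of ${\mathcal C}_2(\H)$. Apart from this bookkeeping I expect no genuine obstacle: all the real difficulty has already been absorbed into producing the representation $\psi(A)=PA^{\dag}Q$, and the present statement is essentially a consequence of the ideal property of the Hilbert--Schmidt norm.
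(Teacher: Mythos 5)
Your proof is correct and follows exactly the route the paper intends: the paper's own proof is the one-line observation that $\norm{AB}_2\leq\norm{A}\,\norm{B}_2$, applied to the representation $\psi(A)=PA^\dag Q$ with $P,Q$ bounded and boundedly invertible, and your argument is simply a careful expansion of that, including the density of ${\mathcal C}_1(\H)$ in ${\mathcal C}_2(\H)$ and the surjectivity check via the inverse map.
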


\begin{proof} It is enough to apply the property 
  \[\norm{AB}_2\leq\norm{A}\norm{B}_2.\]
\end{proof}

\subsection{Proving that $P,Q$ are unitary}

The following should be obvious.

\begin{lemma}
  \label{l:T}
  Let $X$ be a set equipped with two topologies, $\tau$ and $\sigma$, and let a map $f\colon X\to X$ be a homeomorphism with regard to the both topologies. Denote $T$ the set of all points $x\in X$ at which the neighbourhood filter $\tau$ contains the neighbourhood filter of $\sigma$. Then $T$ is invariant under $f$:
  \[f(T)=T.\]
\end{lemma}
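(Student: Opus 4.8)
The statement to prove is Lemma \ref{l:T}: given a set $X$ with two topologies $\tau$ and $\sigma$, and a map $f\colon X\to X$ that is a homeomorphism with respect to both, the set $T$ of points where the neighbourhood filter of $\tau$ contains that of $\sigma$ satisfies $f(T)=T$.

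\medskip

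The plan is to show both inclusions $f(T)\subseteq T$ and $f^{-1}(T)\subseteq T$, the latter giving $T\subseteq f(T)$ and hence equality. The essential point is that the defining property of $T$ is \emph{local} and \emph{topologically invariant}: it only refers to the comparison of the two neighbourhood filters at a single point, and a homeomorphism transports neighbourhood filters to neighbourhood filters. First I would fix a point $x\in T$ and let $y=f(x)$; I want to verify that $y\in T$, i.e.\ that every $\sigma$-neighbourhood of $y$ is also a $\tau$-neighbourhood of $y$.

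\medskip

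Let $V$ be a $\sigma$-neighbourhood of $y=f(x)$. Since $f$ is a $\sigma$-homeomorphism, $f^{-1}(V)$ is a $\sigma$-neighbourhood of $x$. Because $x\in T$, the $\tau$-filter at $x$ contains the $\sigma$-filter at $x$, so $f^{-1}(V)$ is also a $\tau$-neighbourhood of $x$. Now apply the fact that $f$ is a $\tau$-homeomorphism: it carries $\tau$-neighbourhoods of $x$ to $\tau$-neighbourhoods of $f(x)=y$, so $f\bigl(f^{-1}(V)\bigr)=V$ is a $\tau$-neighbourhood of $y$. As $V$ was an arbitrary $\sigma$-neighbourhood of $y$, we conclude $y\in T$, proving $f(T)\subseteq T$.

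\medskip

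For the reverse inclusion I would run the identical argument with $f^{-1}$ in place of $f$, which is again a homeomorphism for both topologies; this yields $f^{-1}(T)\subseteq T$, equivalently $T\subseteq f(T)$. Combining the two inclusions gives $f(T)=T$. There is no real obstacle here---the result is, as the paper says, essentially formal---but the one point requiring a little care is keeping straight which homeomorphism property (the $\sigma$-one or the $\tau$-one) is invoked at each step: pulling $V$ back uses continuity of $f$ for $\sigma$, while pushing $f^{-1}(V)$ forward as a $\tau$-neighbourhood uses that $f$ is open (equivalently, $f^{-1}$ continuous) for $\tau$. Since $f$ is a homeomorphism for both topologies, all four properties are available and the argument closes cleanly.
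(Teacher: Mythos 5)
Your proof is correct and is exactly the routine filter-chasing argument the authors had in mind; the paper itself omits the proof entirely, introducing the lemma with ``The following should be obvious.'' You correctly identify the one point needing care---using $\sigma$-continuity of $f$ to pull $V$ back and $\tau$-openness of $f$ to push it forward---and the symmetric argument with $f^{-1}$ closes the equality.
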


\begin{lemma}
  The saturated unitary orbit ${\mathcal{SU}}(C)$ is invariant under $\psi$.
\end{lemma}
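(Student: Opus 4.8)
The plan is to prove that $\psi\bigl(\mathcal{SU}(C_q)\bigr)=\mathcal{SU}(C_q)$, from which the invariance statement follows immediately. Since $\psi$ is the dual map of a $q$-numerical radius isometry $\phi$ fixing zero, $\psi$ is a real-linear isometry of $\bigl(\mathcal{C}_1(\H),r_q^\ast\bigr)$, and therefore maps the unit ball of $r_q^\ast$ onto itself. In particular, $\psi$ permutes the extreme points of that unit ball. My strategy is to characterize the saturated unitary orbit $\mathcal{SU}(C_q)$ intrinsically among the extreme points, using the two weak$^\ast$ topologies $w_1^\ast$ and $w_2^\ast$ introduced above, and then invoke the topological invariance supplied by Lemma~\ref{l:T}.

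First I would apply Lemma~\ref{l:T} with the two topologies on the unit ball of $\bigl(\mathcal{C}_1(\H),r_q^\ast\bigr)$ taken to be $w_1^\ast$ and $w_2^\ast$, and $f=\psi$. The crucial preliminary is that $\psi$ is a homeomorphism with respect to \emph{both} topologies: it is a $w_1^\ast$-homeomorphism because it is the dual of a linear homeomorphism $\phi$ of $\mathcal{K}(\H)$, and it is a $w_2^\ast$-homeomorphism by the Corollary above, which shows $\psi$ extends to an isomorphism of the Hilbert-Schmidt space $\mathcal{C}_2(\H)$ (so $\psi$ and its inverse are $w_2^\ast$-continuous). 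Lemma~\ref{l:T} then tells me that the set $T$ of points where the $w_1^\ast$-neighbourhood filter contains the $w_2^\ast$-neighbourhood filter is invariant under $\psi$. The idea is that $\mathcal{SU}(C_q)$ sits inside $T$: by Lemma~\ref{l:finer}, at each point of $\mathcal{SU}(C_q)$ the $w_2^\ast$-topology refines the Hilbert-Schmidt topology, and since the Hilbert-Schmidt topology in turn refines the trace-class (hence $w_1^\ast$) topology, the $w_1^\ast$-filter is coarser than the $w_2^\ast$-filter there.

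Next I would combine this with the extreme-point structure. By Lemma~\ref{l:partilconverse}, every extreme point of the unit ball lies in the $w_1^\ast$-closure of $\mathcal{SU}(C_q)$, while $\mathcal{SU}(C_q)$ itself consists of extreme (indeed exposed) points of the ball. Since $\psi$ preserves rank one operators (established above) and maps $\mathcal{SU}(C_q)$ into the extreme points, I would isolate $\mathcal{SU}(C_q)$ as exactly those extreme points of the unit ball that are rank one operators lying in $T$, or more precisely intersect the invariant set $T$ with the set of rank one operators (closed in the strong, hence $w_1^\ast$, topology by Lemma~\ref{l:rk}) and the extreme points. The geometric description of Theorem~\ref{th:cq} then pins $\mathcal{SU}(C_q)$ down via the rank, the Hilbert-Schmidt norm, and $\abs{\tr}$; each of these quantities is $\psi$-invariant up to the scalar structure because $\psi$ is an isometry preserving the norm $r_q^\ast$ whose unit ball is $\conv\,\mathcal{SU}(C_q)$.

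The main obstacle, as the authors themselves flag in the Remark, is that one cannot simply argue ``$\mathcal{SU}(C_q)$ equals the extreme points'': the extreme points of $\conv\,\mathcal{SU}(C_q)$ might in principle be a strictly larger set, since the two topologies coinciding on the orbit does not force the orbit to be closed in the ambient weak$^\ast$ topology. The delicate point is therefore to show that the invariant set $T$ meets the extreme points in \emph{precisely} $\mathcal{SU}(C_q)$ and not in some spurious limit points — this is exactly where the combination of weak continuity (Lemma~\ref{l:finer}) with Milman's converse (Lemma~\ref{l:partilconverse}) must be leveraged, rather than a naive Krein--Milman argument. I expect the heart of the proof to lie in verifying that any extreme point outside the orbit fails the $T$-condition, so that the $\psi$-invariant characterization ``extreme point $\cap$ $T$'' selects $\mathcal{SU}(C_q)$ cleanly.
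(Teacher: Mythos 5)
Your outline assembles the right ingredients (Lemma \ref{l:T}, Lemma \ref{l:finer}, Milman's converse) but does not close the argument, and the step you defer to ``the heart of the proof'' is precisely the one that cannot be carried out. You propose to characterize ${\mathcal{SU}}(C_q)$ as the set of extreme points of the unit ball lying in the invariant set $T$, which would require showing that every extreme point \emph{outside} the orbit fails the $T$-condition. Nothing supports this, and the authors state explicitly that they do not know whether the extreme points of the ball are all contained in the orbit; your plan thus demands a classification of extreme points that is exactly the missing ingredient the whole construction is designed to circumvent. The paper's actual argument is pointwise and needs no such characterization: for $A\in{\mathcal{SU}}(C_q)$, the image $\psi(A)$ is an extreme point, hence by Lemma \ref{l:partilconverse} lies in the $w_1^\ast$-closure of ${\mathcal{SU}}(C_q)$; since $\psi(A)\in T$, every Hilbert--Schmidt neighbourhood of $\psi(A)$ contains a $w_1^\ast$-neighbourhood and therefore meets ${\mathcal{SU}}(C_q)$; and since the orbit is complete, hence closed, in the Hilbert--Schmidt metric (Corollary \ref{c:closedHS}), $\psi(A)$ must belong to it. You never invoke Corollary \ref{c:closedHS}, and without it your argument cannot terminate.

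There are two further technical slips. First, you apply Lemma \ref{l:T} to the pair $(w_1^\ast,w_2^\ast)$; but $w_1^\ast$ is finer than $w_2^\ast$ everywhere, so your set $T$ is the entire ball and its invariance says nothing. The relevant pair is $w_1^\ast$ versus the Hilbert--Schmidt \emph{norm} topology: Lemma \ref{l:finer} shows that at points of ${\mathcal{SU}}(C_q)$ the $w_2^\ast$-filter (hence the finer $w_1^\ast$-filter) contains the Hilbert--Schmidt norm filter, so the orbit lies in the nontrivial set $T$ for that pair, and $\psi$ is a homeomorphism of the ball for both of those topologies. Second, your claim that the Hilbert--Schmidt norm and $\abs{\tr(\cdot)}$ are $\psi$-invariant is unjustified at this stage: the preceding Corollary only gives that $\psi$ extends to an \emph{isomorphism} of ${\mathcal C}_2(\H)$, not an isometry, and no trace preservation has been established. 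Those quantities serve in Theorem \ref{th:cq} to describe the orbit, not to transport it through $\psi$.
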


\begin{proof}
  Let $A\in {\mathcal{SU}}(C)$ be any point.
  Since $\psi(A)$ is an extreme point of $B_C$, by the partial converse to the Krein-Milman theorem (cf. Lemma \ref{l:partilconverse}), $\psi(A)$ belongs to the $w^\ast_1$-closure of ${\mathcal{SU}}(C)$. The application $\psi$, being an invertible trace class bounded dual linear map, is a homeomorphism of the unit ball of $r_C^\ast$ with regard to the $w_1^\ast$-topology, and at the same time a homeomorphism the unit ball of $r_C^\ast$ with regard to the Hilbert-Schmidt topology. 
  By Lemmas \ref{l:finer} and \ref{l:T}, the two topologies on this ball must coincide at the point $\psi(A)$. Since every $w_1^\ast$-neighbourhood of $\psi(A)$ contains a point of ${\mathcal{SU}}(C_q)$, the same is true of every Hilbert-Schmidt neighbourhood of the same point, and so $\psi(A)$
  must belong to the Hilbert-Schmidt closure of ${\mathcal{SU}}(C)$ in the unit ball of $r_C^\ast$. But ${\mathcal{SU}}(C)$ is complete in the Hilbert-Schmidt topology by Corollary \ref{c:closedHS}, hence closed in the Hilbert--Schmidt topology on every ambient space, including the ball in question.
\end{proof}

Now we can conclude that $P,Q$ are unitary just like in \cite{maria}, Proposition 2.8, whose proof remains true for the infinite dimensional case with a suitable adjustements, as follows.

\begin{proposition}
  Let $\H$ be an infinite dimensional Hilbert space and let $\psi(A)= PA^\dag Q$ for every $A\in {\mathcal C}_1(\H)$, where $A^\dag$ is either $A$ or $A^t$ or $A^\ast$ or $\bar A$. If $\psi({\mathcal{SU}}(C)) ={\mathcal{SU}}(C)$, then each of $P$ and $Q$ is a scalar multiple of a unitary operator on $\H$ and $PQ=\lambda I$ for a complex number $\lambda$ of modulus one. Consequently, there exists a unitary operator $U\in{\mathcal U}(\H)$ such that $\psi(A)=\lambda U^\ast A U$ for every $A\in {\mathcal C}_1(\H)$.
\end{proposition}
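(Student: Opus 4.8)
The plan is to read off from the hypothesis $\psi(\SCq)=\SCq$ two numerical constraints on $P$ and $Q$ and to exploit them in turn. I treat the representative case $A^\dag=A$, the three remaining cases being entirely analogous once $a\otimes b^\ast$ is replaced by its transpose, adjoint or conjugate. By Theorem~\ref{th:cq} the orbit is $\SCq=\{a\otimes b^\ast:\norm a=\norm b=1,\ \abs{\langle a,b\rangle}=q\}$, and since $\psi(a\otimes b^\ast)=(Pa)\otimes(Q^\ast b)^\ast$, the requirement $\psi(a\otimes b^\ast)\in\SCq$ amounts to
\[\norm{Pa}\,\norm{Q^\ast b}=1\qquad\text{and}\qquad \abs{\langle Q^\ast b,Pa\rangle}=q\]
for all unit vectors $a,b$ with $\abs{\langle a,b\rangle}=q$. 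Write $g(a)=\norm{Pa}^2$ and $h(b)=\norm{Q^\ast b}^2$.

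First I would use the norm condition $g(a)h(b)=1$ to show that $g$ (and likewise $h$) is constant on the unit sphere. The key point is that $g(a_1)=g(a_2)$ whenever two unit vectors $a_1,a_2$ share a common unit vector $b_0$ with $\abs{\langle a_1,b_0\rangle}=\abs{\langle a_2,b_0\rangle}=q$, since then $g(a_i)=1/h(b_0)$ for $i=1,2$. After rotating $a_2$ by a unimodular scalar (which changes neither $g$ nor the orbit membership) one may assume $\langle a_2,a_1\rangle\ge0$; the normalized internal bisector $v_0$ of $a_1,a_2$ then satisfies $\abs{\langle a_i,v_0\rangle}=\cos(\theta/2)$, where $\theta$ is the angle between $a_1$ and $a_2$. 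Provided $\cos(\theta/2)\ge q$, scaling $v_0$ down to length $q/\cos(\theta/2)$ and completing it by a vector orthogonal to $\spn\{a_1,a_2\}$ (available in infinite dimensions) yields the required $b_0$. Hence $g$ is constant on every spherical cap of angular radius $<2\arccos q$, i.e.\ locally constant, and since the unit sphere is connected, $g\equiv c$ for some $c>0$. Thus $\langle P^\ast Pa,a\rangle=c\norm a^2$ for all $a$, so $P^\ast P=cI$ by polarization and, $P$ being invertible, $U_1:=c^{-1/2}P$ is unitary. Applying the same to $h$ gives $QQ^\ast=c^{-1}I$ (the relation $gh\equiv1$ fixing the constant), so $U_2:=c^{1/2}Q^\ast$, equivalently $Q=c^{-1/2}U_2^\ast$, is unitary.

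Next I would feed this into the trace condition. With $W:=U_2^\ast U_1$ unitary, the identity $\langle Q^\ast b,Pa\rangle=\langle U_2 b,U_1 a\rangle=\langle b,Wa\rangle$ recasts the second requirement as $\abs{\langle b,Wa\rangle}=q$ for all unit $a,b$ with $\abs{\langle a,b\rangle}=q$. Fix a unit vector $a$ and write every admissible $b$ as $b=qa+pb'$ with $b'\perp a$, $\norm{b'}=1$ and $p=\sqrt{1-q^2}$; then $\langle b,Wa\rangle=q\mu+p\langle b',Wa\rangle$ with $\mu=\langle a,Wa\rangle$. Replacing $b'$ by $e^{i\chi}b'$ rotates $\langle b',Wa\rangle$ through a full circle, so the requirement forces the entire circle of radius $p\abs{\langle b',Wa\rangle}$ centred at $q\mu$ to lie on $\{\abs z=q\}$. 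This can happen only if either $\mu=0$ and $\abs{\langle b',Wa\rangle}=q/p$ for every unit $b'\perp a$, or $\langle b',Wa\rangle=0$ for every such $b'$ and $\abs\mu=1$. The first alternative is impossible in infinite dimensions, since choosing infinitely many orthonormal $b'_k\perp a$ would force $\sum_k\abs{\langle b'_k,Wa\rangle}^2=\infty$ against $\norm{Wa}=1$. Hence $Wa=\mu a$ with $\abs\mu=1$ for every unit $a$, so $W=\lambda I$ with $\abs\lambda=1$. Unwinding, $U_1=\lambda U_2$, whence $PQ=U_1U_2^\ast=\lambda I$ and, putting $U:=U_2^\ast$, one gets $\psi(A)=PAQ=U_1AU_2^\ast=\lambda U^\ast A U$.

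The main obstacle is the first step: extracting from the single scalar identity $g(a)h(b)=1$ that $g$ is constant. Everything there hinges on producing, for nearby unit vectors, a common $q$-related test vector---a small piece of two-dimensional Euclidean geometry that nonetheless must be verified uniformly in the configuration and then propagated around the connected sphere. The endpoint $q=1$ (the classical numerical radius, where $p=0$) degenerates: the planar argument collapses, and one instead uses the Cauchy--Schwarz equality case, $\abs{\langle a,Wa\rangle}=1$ forcing $Wa\parallel a$, to reach the same conclusion. The trace step, by contrast, is an elementary planar computation made rigid precisely by the infinite dimension.
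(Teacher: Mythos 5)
Your proof is correct for $0<q<1$ and reaches the same conclusion as the paper, but by a genuinely different route in both halves. For the norm condition, the paper does not try to show $\norm{Pa}$ is constant: it takes $x\perp y$, sets $z=qx+e^{is}\sqrt{1-q^2}\,y$, observes that $\norm{qQ^\ast x+e^{is}\sqrt{1-q^2}\,Q^\ast y}$ is independent of $s$, and invokes two lemmas from \cite{maria} (constancy of $\norm{u+e^{is}v}$ in $s$ forces $u\perp v$; an orthogonality-preserving operator is a scalar multiple of an isometry). Your spherical-cap-plus-connectedness argument replaces both imported lemmas by an elementary computation and polarization; it is self-contained, at the cost of the chaining step needed when $q>1/\sqrt 2$. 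For the trace condition, the paper obtains the same dichotomy ($\langle QPx,x\rangle=0$ or of modulus one, and note $QP=U_2^\ast U_1=W$ in your normalization) but then concludes that $W(QP)$ is a singleton via the Toeplitz--Hausdorff convexity theorem; you instead kill the $\mu=0$ branch by Bessel's inequality, which genuinely uses $\dim\H=\infty$ where the convexity argument is dimension-free, and read off that every unit vector is an eigenvector of $W$. Both are sound. The one place where you are thinner than you should be is the endpoint $q=1$: there the only admissible $b$ for a given $a$ are the unimodular multiples of $a$, so the norm step yields only $\norm{Pa}\,\norm{Q^\ast a}=1$ and your constancy argument collapses (not merely the trace step you flag); one must first use that $Pa\otimes(Q^\ast a)^\ast$ is a unimodular multiple of a rank-one projection to get $Q^\ast a\parallel Pa$ for every $a$, hence $Q^\ast=\gamma P$ by invertibility, and only then recover constancy of $\norm{Pa}$. (The paper's own proof is equally silent at $q=1$, since its appeal to Lemma 2.6 of \cite{maria} likewise degenerates when the coefficient $\sqrt{1-q^2}$ vanishes.)
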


\begin{proof}
  Since each of the maps $A\mapsto A^t$, $A\mapsto \bar A$ and $A\mapsto A^\ast$ preserves the saturated unitary orbit, it suffices to prove the Proposition for the case $\psi(A) = PAQ$.
  
  Let $x,y\in \H$ be such that $x\perp y$ and $\norm x =\norm y =1$. Let $s\in\R$ and $z=qx + e^{is}\sqrt{1-q^2}y$. Then $x\otimes z^\ast$ is a rank one operator of the Hilbert-Schmidt norm $1$ and trace $q$, hence $x\otimes z^\ast\in {\mathcal {SU}}\,(C_q)$ by Theorem \ref{th:cq}. Thus $P(x\otimes z^\ast) Q=\psi(x\otimes z^\ast)\in {\mathcal SU}\,(C_q)$, which implies that
  \begin{enumerate}
  \item[(a)] $\norm{P(x\otimes z^\ast) Q}=1$, amd
  \item[(b)] $\abs{\tr(P(x\otimes z^\ast) Q)}=q$.
  \end{enumerate}
  
  From condition (a), we have $1=\norm{P(x\otimes z^\ast) Q}=\norm{Px}\cdot\norm{Q^\ast z}$, therefore $\norm{qQ^\ast x +e^{is}\sqrt{1-q^2}Q^\ast y}=K$ for all $s\in\R$, where $K$ is a positive constant. Lemma 2.6 in \cite{maria} says that if $u,v$ are two vectors in a complex Hilbert space with the property $\norm{u+e^{is}v}=1$ for all $s\in\R$, then $u\perp v$. We conclude: $Q^\ast x \perp Q^\ast y$.
  
  We conclude that $Q^\ast$ preserves orthogonality. By Lemma 2.7 in \cite{maria}, a bounded linear operator on a Hilbert space is a scalar multiple of isometry if and only if it preserves orthogonality of vectors. We conclude that $Q^\ast$ is a scalar multiple of an isometry of $\H$, and therefore a scalar multiple of a unitary operator on $\H$. Similarly, $P$ is a scalar multiple of a unitary operator.
  
  The trace condition (b) implies that
  \[\left\vert q\langle QPx, x\rangle + \sqrt{1-q^2} e^{is}\langle QPx, y\rangle\right\vert =q,~~\mbox{ for every }s\in\R.\]
  This implies that $\langle QPx,x\rangle =0$ or $\abs{\langle QPx,x\rangle}=1$. Since this is true for every unit vector $x$, we get that $W(QP)$, the classical numerical range for $QP$, is included in the union of the unit circle and $\{0\}$. The well-known convexity of the classical numerical range (the Toeplitz-Hausdorff theorem \cite{hausdorff,toeplitz}) implies that $W(QP)$ is a singleton of modulus $1$ or $0$. But $QP$ is evidently nonzero, hence $QP=\lambda I$, for a complex number of modulus one.
\end{proof}

Now we are ready to establish the main result.
\vskip .2cm

\noindent
{\sc Proof of Theorem \ref{th:main}.}
For the ``if'' part, it is easy to see that
\[r_q(A)=r_q(\bar A)=r_1(A^t)=r_q(A^\ast).\]
  
Conversely, if $\phi$ is an isometry of the normed space $({\mathcal{C}}_1(\H),r_q)$ satisfying $\phi(0)=0$, then $\phi$ is linear, and  there exist a unitary operator $U$ on $\H$ and a complex number $\lambda$ of modulus $1$ such that the dual operator $\psi$ on ${\mathcal C}_1(\H)$ satisfies
\[\psi(A) = \lambda U^\ast A^\dag U\mbox{ for all }A\in {\mathcal C}_1(\H).\]
This implies, in view of the pairing between ${\mathcal C}_1(\H)$ and ${\mathcal K}(\H)$ (Eq. (\ref{eq:pairing})),
\[\phi(A) = \bar\lambda U A^\dag A^\ast\mbox{ for every }A\in {\mathcal K}(\H).\]

\bibliographystyle{elsarticle-num}

\begin{thebibliography}{00}

  \bibitem{maria}
  Maria Inez Cardoso Gon\c calves, Ahmed Ramzi Sourour, {\em
  Isometries of a generalized numerical radius,}
  Linear Algebra Appl. \textbf{429} (2008), no. 7, 1478--1488. 

\bibitem{FJ}
R.J. Fleming and J.E. Jamison, {\em Isometries on Banach spaces: function spaces,}
Chapman \& Hall/CRC Monographs and Surveys in Pure and Applied Mathematics, \textbf{129}. Chapman \& Hall/CRC, Boca Raton, FL, 2003. x+197 pp.

\bibitem{hausdorff}
F. Hausdorff, {\em Der Wertvorrat einer Bilinearform,} Math. Z. \textbf{3} (1919), 314--316.

\bibitem{kubrusly}
Carlos S. Kubrusly, {\em Spectral theory of operators on Hilbert spaces,} BirkhŠuser/Springer, New York, 2012. 

  \bibitem{kuzmaLiRodman}
  Bojan Kuzma, Chi-Kwong Li, Leiba Rodman , {\em Tracial numerical ranges and linear dependence of operators,} Electron. J. Linear Algebra, 22 (2011).
  
  \bibitem{lesnjak} G. Le\v snjak, {\em Additive preservers of numerical range,} Linear Algebra Appl. \textbf{345} (2002), 235--253.
  
  \bibitem{LS} C.K. Li, P. \v Semrl, {\em Numerical radius isometries,} Linear and Multilinear Algebra \textbf{50(4)} (2002), 307--314.
  
\bibitem{LMR}
C.-K. Li, P.P. Mehta, L. Rodman, {\em A generalized numerical range: the range of a constrained sesquilinear form,}
Linear and Multilinear Algebra \textbf{37} (1994), 25--49. 

\bibitem{milman}
D. Mil'man, {\em
Characteristics of extremal points of regularly convex sets,} 
Doklady Akad. Nauk SSSR (N.S.) \textbf{57} (1947), 119--122 (in Russian). 

\bibitem{OS}
M. Omladi\v c, P. \v Semrl, {\em Additive mappings preserving operators of rank one,} Linear Algebra Appl. \textbf{182} (1993),
239--256.

\bibitem{sakai} S. Sakai, {\it $C^\ast$-Algebras and $W^\ast$-Algebras,}
Springer-Verlag, Berlin--Neidelberg--NY, 1971; Reprinted,
Springer, 1998.

\bibitem{toeplitz}
O. Toeplitz, {\em Das algebraische Analogon zu einem Satz von Fej\'er,} Math. Z. \textbf{2} (1918), 187--197.

 \end{thebibliography}

\end{document}